\renewcommand{\mod}[1]{{\ifmmode\text{\rm\ (mod~$#1$)}\else\discretionary{}{}{\hbox{ }}\rm(mod~$#1$)\fi}}
\newcommand{\ord}{\mathop{\rm ord}\nolimits}
\newcommand{\ep}{\varepsilon}
\newcommand{\F}{{}_2F\!_1}
\newcommand{\N}{{\mathbb N}}
\newcommand{\R}{{\mathbb R}}
\newcommand{\Z}{{\mathbb Z}}
\newcommand{\dq}{Diophantine quadruple}
\newcommand{\dqs}{\dq s}
\newcommand{\dr}{doubly regular}
\newcommand{\drdq}{\dr\ \dq}
\newcommand{\drdqs}{\dr\ \dq s}
\newcommand{\var}{V}
\newtheorem{theorem}{Theorem}[section]
\newtheorem{proposition}[theorem]{Proposition}
\newtheorem{corollary}[theorem]{Corollary}
\newtheorem{lemma}[theorem]{Lemma}
{
\theoremstyle{remark}

}
\begin{document}

\title[Diophantine quadruples]{Erd\H os-Tur\'an with a moving target, equidistribution of roots of reducible quadratics, and Diophantine quadruples}
\author{Greg Martin and Scott Sitar}
\address{Department of Mathematics \\ University of British Columbia \\ Room
121, 1984 Mathematics Road \\ Canada V6T 1Z2}
\email{gerg@math.ubc.ca and sesitar@math.ubc.ca}
\subjclass[2000]{Primary 11D45, 11N45; secondary 11K06, 11K38.}
\maketitle

\begin{abstract}
A {\em Diophantine $m$-tuple} is a set $A$ of $m$ positive integers such that $ab+1$ is a perfect square for every pair $a,b$ of distinct elements of~$A$. We derive an asymptotic formula for the number of Diophantine quadruples whose elements are bounded by~$x$. In doing so, we extend two existing tools in ways that might be of independent interest. The Erd\H os-Tur\'an inequality bounds the discrepancy between the number of elements of a sequence that lie in a particular interval modulo~1 and the expected number; we establish a version of this inequality where the interval is allowed to vary. We also adapt an argument of Hooley on the equidistribution of solutions of polynomial congruences to handle reducible quadratic polynomials.
\end{abstract}

\thispagestyle{empty}
\section{Introduction}

A {\em Diophantine $m$-tuple} is a set $A$ of $m$ positive integers such that $ab+1$ is a perfect square for every pair $a,b$ of distinct elements of~$A$. For example, the first \dq\ $\{1,3,8,120\}$ was found by Fermat. (The nomenclature refers to Diophantus, who found a set that has the analogous property in the rational numbers, namely $\{1/16,33/16,17/4,105/16\}$.) It was proved by Dujella~\cite{other dujella} that there are no Diophantine sextuples and only finitely many Diophantine quintuples (while a folklore conjecture asserts that there are no Diophantine quintuples at all). However, there are infinitely many Diophantine $m$-tuples for $2\le m\le4$, and thus it is an interesting problem to try to count how many there are beneath a given bound.

Dujella showed~\cite{dujella} that the number of Diophantine pairs contained in $[1,x]$ is asymptotic to $\frac6{\pi^2}x\log x$, while the number of Diophantine triples contained in $[1,x]$ is asymptotic to $\frac3{\pi^2}x\log x$. He considered the same counting problem for \dqs, obtaining for sufficiently large $x$ the lower and upper bounds $0.1608 x^{1/3}\log x$ and $0.5354 x^{1/3}\log x$, respectively, for the number of \dqs\ contained in $[1,x]$. The primary purpose of this paper is to establish the following asymptotic formula:

\begin{theorem}
The number of \dqs\ contained in $[1,x]$ is given by the asymptotic formula
\[
  Cx^{1/3}\log x + O \big( x^{1/3} (\log x)^{2/3+\sqrt{2}/6} (\log \log x)^{5/12} \big),
\]
where $C ={2^{4/3}/3\Gamma(\tfrac23)^3} \approx 0.338285$.
\label{main thm}
\end{theorem}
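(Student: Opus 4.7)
The plan is to reduce the counting of \dqs\ to a two-variable lattice-point problem constrained by a quadratic congruence, and then attack this count with the two analytic tools highlighted in the abstract.

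The first step is to identify the dominant family of quadruples. For $\{a,b,c,d\}$ with $a<b<c<d\le x$, the canonical construction is to start from a Diophantine pair $\{a,b\}$ with $ab+1=r^2$, take the smallest Diophantine extension $c=a+b+2r$, and then take the regular extension $d=a+b+c+2abc+2rst$, where $r,s,t=\sqrt{ab+1},\sqrt{ac+1},\sqrt{bc+1}$. A direct computation gives $b=(r^2-1)/a$, $c=\bigl((a+r)^2-1\bigr)/a$, $s=a+r$, and $t=b+r$, so $b$, $c$, $s$, $t$, and $d$ are all explicit rational functions of the pair $(a,r)$. Larger Pell-chain extensions of the base triple and (conjecturally nonexistent) non-regular quadruples should contribute only to the error term; I would handle these by separate, cruder counting arguments.

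With this reduction, the condition $d\le x$ becomes a region $\mathcal R_x$ in the $(a,r)$-plane cut out by an explicit polynomial inequality of degree four in $r$, and the count of quadruples becomes
\[
\sum_{a}\#\bigl\{r:\ (a,r)\in\mathcal R_x,\ a<r,\ r^2\equiv 1\pmod{a}\bigr\}
\]
up to negligible error. The expected density of solutions to $r^2\equiv 1\pmod{a}$ in an interval of length $L$ is $L\rho(a)/a$, where $\rho(a)$ counts roots mod $a$, so the anticipated main term is the weighted integral of $\rho(a)/a$ over $\mathcal R_x$. After rescaling and summing over $a$, this should reduce to a beta-type integral whose evaluation via standard $\Gamma$-function identities produces the constant $C=2^{4/3}/\bigl(3\Gamma(2/3)^3\bigr)$ multiplying $x^{1/3}\log x$.

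To bound the discrepancy between the actual count and this expected density, the plan is to apply the moving-target Erd\H os--Tur\'an inequality developed earlier in the paper to the fractional parts $\{r/a\}$ as $r$ ranges over solutions of $r^2\equiv 1\pmod{a}$. The moving-target feature is essential because the relevant interval of $r$ cut out by $\mathcal R_x$ genuinely varies with $a$; the classical Erd\H os--Tur\'an inequality would lose too much here. After this step, everything reduces to estimating exponential sums of the form $\sum_{r^2\equiv 1\,(a)}e(hr/a)$, averaged over small $h$ and over $a$.

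The main obstacle will be proving the needed bounds for these exponential sums. Hooley's classical method handles irreducible quadratic congruences, whereas $X^2-1$ is reducible and has $\rho(a)=2^{\omega(a)}$-many roots for squarefree odd $a$, which introduces divisor-type fluctuations that must be controlled uniformly in $h$ and $a$. Adapting Hooley's argument to this reducible setting is where most of the technical effort should go, and the final choice of truncation parameter in Erd\H os--Tur\'an balanced against the resulting exponential-sum bound should deliver the claimed error $O\bigl(x^{1/3}(\log x)^{2/3+\sqrt2/6}(\log\log x)^{5/12}\bigr)$, with the somewhat exotic exponent $\sqrt2/6$ emerging from this optimization.
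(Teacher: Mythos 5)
Your high-level outline is on target: reduce to the doubly regular family (citing Dujella for the $O(x^{0.292}\log^2 x)$ contribution of the rest), turn the count into a congruence/lattice problem, control the boundary error via a moving-target Erd\H os--Tur\'an inequality fed by a Hooley-type exponential-sum bound for the reducible quadratic $t^2-1$, and extract $C=2^{4/3}/3\Gamma(\tfrac23)^3$ from a $\Gamma$-function identity. The paper does all of those things, in that order, and your guess about where the exponent $\sqrt2/6$ comes from (optimizing the Erd\H os--Tur\'an truncation against the Hooley bound) is also correct.

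There is, however, a concrete gap in the parametrization you propose. You sum over $a$ and treat $r$ via $r^2\equiv1\pmod a$ with $r>a$, so for each fixed $a$ the roots $r$ form a union of arithmetic progressions $r=\nu+ak$, $\nu\in R(a)$, $k\ge1$, truncated by the polynomial condition $4r(a+r)(b+r)\le x$. This is not an equidistribution problem for $\{r/a\}\bmod 1$: the fractional part $\{r/a\}=\nu/a$ is the same for every $k$, and the constraint lives on $r$ (equivalently on $k$), not on $\{r/a\}$. When you unpack the lattice-point count $\sum_a\sum_{\nu\in R(a)}\lfloor(r_{\max}(a)-\nu)/a\rfloor$, the Erd\H os--Tur\'an piece that actually appears has target length equal to the fractional part $\{r_{\max}(a)/a\}$, and that sequence (mod~1) has total variation of order $x^{1/4}$, since $r_{\max}(a)/a\asymp(x/a^3)^{1/4}$ decreases from $\asymp x^{1/4}$ to $O(1)$ as $a$ runs to $x^{1/3}$. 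The moving-target inequality is then useless because its $\var_N(\beta)$ factor dominates. The paper avoids this precisely by parametrizing by $b$ instead of $a$: the inequality $a<b$ forces $r\le b$, so each doubly regular quadruple corresponds to a single normalized root $r/b\in(0,1]$ with $r\in R(b)$, and the condition $d\le x$ becomes $r/b\le\lambda(b,x)$ with $\lambda(b,x)$ a monotone function of total variation at most $1$. That $b$-parametrization (with the $a$-parametrization you describe relegated to the crude truncation bound of Lemma~\ref{sort by a lemma}) is the missing idea; without it the discrepancy term cannot be controlled.
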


\noindent Note that the exponent of $\log x$ in the error term, $\frac23+\frac{\sqrt2}6 \approx 0.90237$, is indeed slightly smaller than~1.

In the course of establishing Theorem~\ref{main thm}, we found ourselves needing to extend two existing tools from the literature to suit our needs; these extensions might be of interest in their own right. The first of these tools is the Erd\H os-Tur\'an inequality, which gives a quantitative bound for the discrepancy between the number of elements of a sequence that lie in a particular interval modulo~1 and the expected number. We require a version of this inequality in which the target interval is allowed to vary. To set some notation, let $u=\{ u_n \}$, $\alpha=\{\alpha_n\}$, and $\beta=\{\beta_n\}$ be sequences of real numbers; we are interested in counting how many elements $u_n$ lie in the corresponding interval $[\alpha_n,\beta_n]$ modulo~1. (So that these intervals modulo~1 are sensible, we make the restriction $\alpha_n \leq \beta_n \leq \alpha_n + 1$ for all~$n$.) Define the counting function
\begin{equation}
  Z_N = Z_N(u;\alpha,\beta) = \#\{ 1\le n \le N : u_n \in [ \alpha_n, \beta_n ] \mod{1} \}
\label {ZN def}
\end{equation}
and the discrepancy between the counting function and the expected number
\begin{equation}
  D_N = D_N(u;\alpha,\beta) = Z_N - \sum_{n=1}^N (\beta_n - \alpha_n).
\label {DN def}
\end{equation}
For any sequence $\{s_n\}$, let $\var_N(s) = \sum_{n=1}^{N-1} \|s_{n+1}-s_n\|$ denote its total variation considered as a sequence modulo~1, where $\|y\| = \min_{z\in\Z} |y-z|$ is the natural metric on $\R/\Z$. We prove the following ``moving target'' extension of the Erd\H os--Tur\'an inequality in Section~\ref{ET section}.

\begin{theorem}
Let $\{u_n\}$, $\{\alpha_n\}$, and $\{\beta_n\}$ be sequences of real numbers with $\alpha_n \leq \beta_n \leq \alpha_n + 1$. With $D_N = D_N(u;\alpha,\beta)$ as defined as in equation~\eqref{DN def}, we have
\begin{equation}
  |D_N| \le \frac{N}{H+1} + \sum_{h=1}^H \big( 1 + \pi h ( \var_N(\alpha) + \var_N(\beta)) \big) \bigg( \frac{2-c}{H+1} + \frac{c}{h} \bigg) M_N(h)
\label{refer to constants}
\end{equation}
for any positive integers $N$ and $H$, where $c=16/7\pi$ and
\begin{equation}
  M_N(h) = \max_{1\le T\le N} \bigg| \sum_{n=1}^T e(hu_n) \bigg|.
\label{MNh def}
\end{equation}
\label{moving target thm}
\end{theorem}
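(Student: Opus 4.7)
The plan is to mirror the classical proof of the Erd\H os--Tur\'an inequality, approximating the indicator function of the target interval by a trigonometric polynomial of degree~$H$, and then addressing the moving-target aspect by partial summation in~$n$. The starting point is the identity
\[
\chi_{[\alpha_n,\beta_n]}(u_n \bmod 1) = (\beta_n - \alpha_n) + \psi(u_n - \beta_n) - \psi(u_n - \alpha_n)
\]
for the sawtooth $\psi(y) = \{y\} - \tfrac12$, which (valid outside a measure-zero set) is easily verified by the case analysis permitted by the hypothesis $\alpha_n \le \beta_n \le \alpha_n+1$; summing over~$n$ and recalling~\eqref{DN def} gives
$D_N = \sum_{n=1}^N \psi(u_n-\beta_n) - \sum_{n=1}^N \psi(u_n - \alpha_n).$

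Next I would invoke the standard Vaaland (or Beurling--Selberg) one-sided trigonometric approximation $\psi^- \le \psi \le \psi^+$ by polynomials of degree at most~$H$, with constant term $\pm 1/(2(H+1))$ and remaining Fourier coefficients $a_h^\pm$ bounded by $|a_h^\pm| \le \tfrac12\bigl((2-c)/(H+1) + c/h\bigr)$ for $1 \le |h| \le H$, where $c = 16/(7\pi)$. Sandwiching $\psi$ between $\psi^\pm$ in the above expression for $D_N$ reduces the problem to controlling, for each $1 \le h \le H$ and each $\gamma_n \in \{\alpha_n, \beta_n\}$, inner sums of the form $\bigl|\sum_{n=1}^N e(-h\gamma_n) e(hu_n)\bigr|$, weighted by $|a_h^\pm|$, plus a leading $N/(H+1)$ error contributed by the constant terms of $\psi^\pm$.

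The crucial moving-target step is to bound these inner sums by Abel summation, treating $e(-h\gamma_n)$ as a slowly varying weight and $e(hu_n)$ as the underlying sequence. The partial sums of the latter are bounded by $M_N(h)$ uniformly in the upper limit, and consecutive weights differ by at most $2\pi h\,\|\gamma_{n+1}-\gamma_n\|$ (using $|e(t)-e(s)| \le 2\pi \|t-s\|$ together with $\|hx\| \le h\|x\|$ for positive integers~$h$). Summation by parts then yields
\[
\Bigl|\sum_{n=1}^N e(-h\gamma_n) e(hu_n)\Bigr| \le \bigl(1 + 2\pi h\, \var_N(\gamma)\bigr) M_N(h),
\]
and adding the $\gamma_n = \alpha_n$ and $\gamma_n = \beta_n$ contributions absorbs the factor of~$2$, producing for each~$h$ a bound of the form $\bigl(1 + \pi h(\var_N(\alpha) + \var_N(\beta))\bigr)\bigl((2-c)/(H+1) + c/h\bigr) M_N(h)$. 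Summing over $1 \le h \le H$ then gives~\eqref{refer to constants}.

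The principal obstacle is the careful bookkeeping in step two, namely verifying that Vaaland's extremal polynomial has Fourier coefficients obeying the sharp bound $|a_h^\pm| \le \tfrac12\bigl((2-c)/(H+1) + c/h\bigr)$ with the specific constant $c = 16/(7\pi)$; this is a delicate but classical trigonometric extremal-function calculation. Once that bound is in hand, the moving-target extension is essentially a single clean application of summation by parts, which converts the variation in the endpoints into the advertised additional factor of $1 + \pi h(\var_N(\alpha) + \var_N(\beta))$ relative to the standard Erd\H os--Tur\'an inequality.
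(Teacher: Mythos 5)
Your proposal follows the paper's argument in all essentials: you approximate the relevant function by one-sided Beurling--Selberg/Vaaler trigonometric polynomials of degree $H$, isolate the constant term $\pm1/(2(H+1))$, and then handle the moving endpoints by summation by parts against the uniform bound $M_N(h)$, exactly as the paper does with its functions $B_H$ and $S_H^\pm$ and its Lemma on $\big|\sum e(hu_n)e(-hs_n)\big|$. Your decomposition via the sawtooth $\psi(y)=\{y\}-\tfrac12$ is a cosmetically different packaging of the same identity: the paper's $S_H^+(\alpha,\beta;y)=\beta-\alpha+B_H(y-\beta)+B_H(\alpha-y)$ is precisely $\beta-\alpha+\psi^+(y-\beta)-\psi^-(y-\alpha)$ with $B_H$ the Vaaler majorant of $\psi$ and $\psi^-(x)=-B_H(-x)$ the minorant, so the two formulations produce the same Fourier-side expression after expanding.

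One bookkeeping point to watch: the Fourier coefficient bound you quoted, $|a_h^\pm|\le\tfrac12\big((2-c)/(H+1)+c/h\big)$, should be $\tfrac14\big((2-c)/(H+1)+c/|h|\big)$ for each individual coefficient $\hat B_H(h)$ with $1\le|h|\le H$ (this is the paper's Lemma~\ref{BHhat bound lemma}); the factor $\tfrac12$ appears only after you fold the $h$ and $-h$ terms together. If you use $\tfrac12$ per coefficient and then also sum over both signs, you end up with twice the claimed bound. Also, deriving this bound with the specific constant $c=16/(7\pi)$ is not a standard off-the-shelf fact; the paper proves it from scratch via an elementary cotangent inequality (Lemmas~\ref{little cot lemma} and~\ref{f bound lemma}), so the step you flagged as ``delicate but classical'' is in fact the new explicit computation the paper supplies. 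With that fixed, your sketch is correct and essentially coincides with the paper's proof.
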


\noindent The following corollary is an immediate consequence of Theorem~\ref{moving target thm} and is easy to apply in many concrete situations.

\begin{corollary}
Let $\{u_n\}$, $\{\alpha_n\}$, and $\{\beta_n\}$ be sequences of real numbers with $\alpha_n \leq \beta_n \leq \alpha_n + 1$. Suppose that both $\alpha$ and $\beta$ are monotone sequences. With $D_N = D_N(u;\alpha,\beta)$ as defined as in equation~\eqref{DN def}, we have
\begin{equation*}
  |D_N| \ll \frac{N}{H} + \big( 1 + |\alpha_N-\alpha_1| + |\beta_N-\beta_1| \big) \sum_{h=1}^H M_N(h)
\end{equation*}
for any positive integers $N$ and $H$, where $M_N(h)$ is defined in equation~\eqref{MNh def}.
\label{moving target cor}
\end{corollary}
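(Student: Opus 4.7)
The plan is to derive the corollary directly from Theorem~\ref{moving target thm} by first simplifying the total variation terms $V_N(\alpha)$ and $V_N(\beta)$ using monotonicity, and then collecting constants in the bound.

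First I would note that since $\|y\| \le |y|$ for every real number $y$, the ``mod~1'' total variation of any sequence is bounded by the ordinary total variation:
\[
V_N(s) = \sum_{n=1}^{N-1} \|s_{n+1}-s_n\| \le \sum_{n=1}^{N-1} |s_{n+1}-s_n|.
\]
If $\{s_n\}$ is monotone, the right-hand side telescopes to $|s_N - s_1|$. Applying this to both $\alpha$ and $\beta$ yields
\[
V_N(\alpha) + V_N(\beta) \le |\alpha_N-\alpha_1| + |\beta_N-\beta_1|.
\]

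Next I would plug this into the bound from Theorem~\ref{moving target thm} and absorb the explicit constants. Since $1 \le h \le H$, we have $\frac{1}{H+1} \le \frac{1}{h}$, and so
\[
\frac{2-c}{H+1} + \frac{c}{h} \le \frac{2-c}{h} + \frac{c}{h} = \frac{2}{h}.
\]
Writing $W = V_N(\alpha) + V_N(\beta)$, each summand in \eqref{refer to constants} is then bounded above by
\[
\bigl(1 + \pi h W\bigr)\cdot \frac{2}{h} \cdot M_N(h) = \Bigl(\frac{2}{h} + 2\pi W\Bigr) M_N(h) \le \bigl(2 + 2\pi W\bigr)\, M_N(h) \ll (1+W)\, M_N(h).
\]
Summing on $h$ from $1$ to $H$ and using $\frac{N}{H+1} \le \frac{N}{H}$, I obtain
\[
|D_N| \ll \frac{N}{H} + (1 + W) \sum_{h=1}^H M_N(h),
\]
which combined with the earlier monotonicity bound $W \le |\alpha_N-\alpha_1| + |\beta_N-\beta_1|$ yields the statement of the corollary.

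There is no real obstacle here; the derivation is a matter of chasing bounds. The one point that deserves care is the reduction from the $\mathbb{R}/\mathbb{Z}$ total variation to the telescoping sum $|s_N - s_1|$, which is precisely where the monotonicity hypothesis is used (without it, the inequality $\|s_{n+1}-s_n\| \le |s_{n+1}-s_n|$ still holds but the sum no longer telescopes).
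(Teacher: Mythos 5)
Your proof is correct and follows precisely the route the paper intends — the corollary is stated as an immediate consequence of Theorem~\ref{moving target thm}, and your chain of elementary estimates (telescoping the total variation under monotonicity, bounding $\frac{2-c}{H+1}+\frac{c}{h}\le\frac{2}{h}$, and then absorbing constants) is the natural instantiation of that claim.
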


\noindent We compare our Theorem~\ref {moving target thm} to the standard Erd\H os--Tur\'an inequality, and exhibit two examples that probe the sharpness of our inequality, at the end of Section~\ref{ET section}.

The second tool that we extend is a result of Hooley on the equidistribution of the roots of polynomial congruences. Specifically, given any polynomial $f(t) \in \Z[t]$, we consider the sequence of real numbers $\frac\nu k$, where $k$ runs over all positive integers up to some bound $y$ and $\nu$ runs, for each $k$, over the roots of the congruence $f(\nu) \equiv 0 \mod{k}$. Corresponding to such a sequence, we define the exponential sum
\begin{equation}
  R_f(h,y) = \sum_{k \leq y} \sum_{\substack{f(\nu) \equiv 0 \mod{k} \\ 0 < \nu \leq k}} e^{2\pi i h \nu/k}.
\label{Rfhx definition}
\end{equation}
Hooley~\cite{hooley, other hooley} established nontrivial upper bounds for $R_f(h,y)$ when $f$ is an irreducible polynomial of degree at least~2. By adapting the methods of~\cite{hooley}, we prove the following analogous upper bound for reducible quadratic polynomials in Section~\ref {equidist section}.

\begin{theorem}
Let $f$ be a reducible, nonsquare quadratic polynomial with integer coefficients, and let $D$ be its discriminant. Let $R_f(h,y)$ be defined as in equation~\eqref{Rfhx definition}. For all real numbers $y\ge3$ and for every integer $h \ne 0$,
$$
  R_f(h,y) \ll \sqrt D \prod_{p\mid h} \bigg( 1+\frac7{\sqrt p} \bigg) \cdot y (\log y)^{\sqrt{2}-1} (\log \log y)^{5/2}.
$$
\label{equidist thm}
\end{theorem}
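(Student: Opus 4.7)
My plan is to exploit the reducibility of $f$ by parametrizing the solutions of $f(\nu) \equiv 0 \pmod{k}$ via the Chinese Remainder Theorem. After absorbing the leading coefficient and performing a preliminary linear change of variable, I reduce to the monic case $f(t) = (t-r_1)(t-r_2)$ with $r_1, r_2 \in \Z$ and $\Delta = r_2 - r_1$ satisfying $\Delta^2 = D$. Each solution $\nu$ then corresponds to an ordered factorization $k = k_1 k_2$ with $\gcd(k_1, k_2) \mid \Delta$, via $\nu \equiv r_1 \pmod{k_1}$ and $\nu \equiv r_2 \pmod{k_2}$. Splitting off the prime-powers of $k$ supported on primes dividing $\Delta$ (a contribution bounded crudely by a factor of $\sqrt{D}$), the main task concerns the generic factorizations $k = k_1 k_2$ with $(k_1, k_2) = 1$.

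For such coprime pairs, the identity $\overline{k_1}/k_2 + \overline{k_2}/k_1 \equiv 1/(k_1 k_2) \pmod{1}$ allows us to rewrite the CRT solution as $\nu/k \equiv r_1/(k_1 k_2) + \Delta\, \overline{k_1}/k_2 \pmod{1}$, so that $e(h\nu/k) = e(hr_1/k) \cdot e(h\Delta\, \overline{k_1}/k_2)$. The factor $e(hr_1/k)$ varies slowly with $k$ and can be absorbed by partial summation, reducing the main estimate to bounding the bilinear Kloosterman-type sum
\[
  T(y) = \sum_{\substack{k_1 k_2 \leq y \\ (k_1, k_2) = 1}} e\!\left( \frac{h\Delta\, \overline{k_1}}{k_2} \right).
\]

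To bound $T(y)$, I would adapt the dyadic strategy from Hooley's treatment of irreducible quadratics: decompose $(k_1, k_2) \sim (K_1, K_2)$ with $K_1 K_2 \asymp y$, and in each dyadic box fix the larger modulus while completing the inner sum to a full residue system and applying the Weil bound $|K(a,b;q)| \leq \tau(q)\sqrt{(a,b,q)\, q}$. The factor $\gcd(h\Delta, k_2)$ appearing in Weil's bound tracks through the sum and produces the advertised factor $\prod_{p \mid h}(1 + 7/\sqrt{p})$. The principal obstacle is assembling the dyadic estimates under the constraint $K_1 K_2 \leq y$ so as to extract the precise log-exponent $\sqrt{2} - 1$; this exponent arises from the average order $\sum_{n \leq x} \sqrt{\tau(n)} \asymp x (\log x)^{\sqrt{2} - 1}$ of the multiplicative function $\sqrt{\tau(n)}$ (whose value at primes is $\sqrt{2}$), which enters after applying Cauchy--Schwarz to the outer summation; the $(\log\log y)^{5/2}$ factor reflects Ford-type anatomical estimates for integers with prescribed divisor structure.
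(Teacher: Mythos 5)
Your approach is genuinely different from the paper's. The paper does not work directly with a CRT parametrization of the roots: it takes Hooley's proof for general quadratics as a black box, and the only places reducibility enters are in the $\rho_f$ estimates. Concretely, the paper reuses Hooley's decomposition $R_f(h,x)=\Sigma_1+\Sigma_2$ (split at a squarefree-core parameter $k_1\lessgtr x^{1/3}$), keeps his Cauchy--Schwarz step $\Sigma_1\ll\sum_{k_1\le x^{1/3}}\sqrt{\Sigma_5\Sigma_6}$, and simply replaces Hooley's irreducible-case estimates for $\rho_f$ by Lemma~\ref{D dependence lemma}, keeping the $h$- and $D$-dependence explicit. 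The $(\log y)^{\sqrt2}$ comes from the specific multiplicative-function sum $\sum_{k_1\le y}\sqrt{2^{\omega(k_1)}(h,k_1)/(k_1\phi(k_1))}$ (Lemma~\ref{being anal with h lemma}), not from $\sum_n\sqrt{\tau(n)}$; your heuristic $\sqrt{\tau(p)}=\sqrt2$ points in the right direction but the function is not the one that actually appears. The $(\log\log y)^{5/2}$ is imported wholesale from Hooley's bound for $\Sigma_6$. Your plan, by contrast, exploits the factorization $f(t)=(t-r_1)(t-r_2)$ to reduce to a bilinear phase $e(h\Delta\overline{k_1}/k_2)$; this is a more hands-on route, and if carried through it would likely produce a stronger bound (closer to the conjectural $O_h(y)$) rather than recover the paper's $y(\log y)^{\sqrt2-1}(\log\log y)^{5/2}$.

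There is, however, a real gap in the key step. You propose to ``fix the larger modulus while completing the inner sum to a full residue system and applying the Weil bound,'' but for $k_1k_2\le y$ with $k_2$ the larger modulus, the range of $k_1$ has length at most $y/k_2\le k_2$, and the Weil completion error $\ll\tau(k_2)\sqrt{(h\Delta,k_2)k_2}\log k_2$ exceeds the trivial bound $y/k_2$ whenever $k_2\gg y^{2/3}$ (equivalently $k_1\ll y^{1/3}$). Summing that completion error over a dyadic block $k_2\sim K_2$ produces $\gg K_2^{3/2}\log^2K_2$, which at $K_2\asymp y^{2/3}$ is already $\gg y\log^2y$: no saving at all. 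The standard remedy is to apply the reciprocity $\overline{k_1}/k_2+\overline{k_2}/k_1\equiv1/(k_1k_2)\pmod1$ a \emph{second} time in the range $k_1<\sqrt{k_2}$, converting the phase to one with modulus $k_1$, and then to sum over the long variable $k_2\sim K_2$ in arithmetic progressions modulo $k_1$ (complete periods give a Ramanujan sum $c_{k_1}(h\Delta)$, the partial period a Weil error $\ll\sqrt{k_1}\tau(k_1)\log k_1$). Your sketch does not mention this, and without it the dyadic assembly fails. A secondary gap is the ``splitting off primes dividing $\Delta$'': you would need to track how the $\nu\pmod{p^\alpha}$ with $p\mid\Delta$ couple through CRT to the remaining moduli, since the $\overline{k_1}$ and $\overline{k_2}$ in your identity change when a $\Delta$-part is removed from $k$; a bare factor of $\sqrt D$ does not obviously account for this interaction.
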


The trivial bound for $R_f(h,x)$, namely the number of summands, has order of magnitude $y\log y$ (see Lemma~\ref {rho asymptotic lemma}), and so Theorem~\ref{equidist thm} represents a nontrivial upper bound for the sum when $y$ is sufficiently large. It follows immediately from Weyl's criterion~\cite[page 1]{montgomery} that the normalized roots $\nu/k$ are equidistributed modulo~1. We remark that for reducible quadratic polynomials $f$, the true order of magnitude of $R_f(h,y)$ is probably $y$; therefore the estimate in Theorem~\ref{equidist thm} cannot be improved too much. We discuss these issues further in Section~\ref{equidist section}.

In Section~\ref{main proof section} we show how these two tools can be used to prove Theorem~\ref{main thm}. Several times in Sections~\ref{equidist section} and~\ref{main proof section}, we need to invoke standard results on sums of multiplicative functions; to preserve the flow of ideas, we defer the proofs of all such results to Section~\ref{mult fn section}.

\section{Erd\H os-Tur\'an with a moving target}
\label{ET section}

In this section we prove Theorem~\ref{moving target thm}. We begin by recalling the standard approach to bounding the discrepancy $D_N$ using exponential sums and Selberg's ``magic functions''. We then derive a bound for the individual Fourier coefficients that arise when using Selberg's functions. At that point, we can use partial summation to finish the proof of Theorem~\ref{moving target thm}. At the end of the section, we compare our ``moving target'' inequality to the standard Erd\H os--Tur\'an inequality and exhibit two examples that probe the sharpness of our inequality.

\subsection{Bounding the discrepancy by exponential sums}

We begin by defining, for any positive integer $H$, the trigonometric polynomial of degree~$H$
\begin{equation}
  B_H(x) = \frac{1}{H+1} \sum_{h=1}^H f \bigg( \frac{h}{H+1} \bigg) \sin 2\pi h x + \frac1{2(H+1)}\sum_{h=-H}^H \bigg( 1 - \frac{|h|} {H+1} \bigg) e(hx),
\label{BH def}
\end{equation}
where $e(y) = e^{2\pi iy}$ as usual and
\begin{equation}
  f(y) = -(1-y) \cot \pi y - \tfrac{1}{\pi}.
\label{f def}
\end{equation}
We have $\hat{B}_H(0) = {1/(2(H+1))}$ and $\hat B_H(h) = 0$ if $|h|\ge H+1$, while if $1\le h\le H$ then
\begin{equation}
  \hat{B}_H(\pm h) = \frac{1}{2(H+1)} \bigg( 1 - \frac{|h|}{H+1}
    \mp i f \bigg( \frac{|h|}{H+1} \bigg) \bigg).
\label{BH hat formula}
\end{equation}
We then define, for any real numbers $\alpha$ and $\beta$ satisfying $\alpha\le \beta\le\alpha+1$, the two related trigonometric polynomials
\begin{align*}
  S_H^+(\alpha,\beta; y) & = \beta - \alpha +
    B_H(y-\beta) + B_H(\alpha - y) \\
  S_H^-(\alpha,\beta; y) & = \beta - \alpha -
    B_H(\beta-y) - B_H( y-\alpha ),
\end{align*}
which satisfy $\hat{S}_H^\pm(0) = \beta-\alpha\pm1/(H+1)$ and, for $h\ne0$,
\begin{align}
  \hat{S}_H^+(h) &= \hat{B}_H(h) e(- h\beta) + \hat{B}_H(- h) e(h\alpha) \notag \\
  \hat{S}_H^{-}(h) &= -\hat{B}_H(- h) e(h\beta) - \hat{B}_H(h) e(- h\alpha).
\label{S Fourier coefficients}
\end{align}
These trigonometric polynomials are useful one-sided approximants to the characteristic function $\chi(\alpha,\beta; y)$ of the interval $[\alpha,\beta]$ modulo~1, which equals~1 if there is some number $z$ between $\alpha$ and $\beta$ such that $y\equiv z\mod 1$ and~0 otherwise:

\begin{proposition}
For any real numbers $\alpha$ and $\beta$ satisfying $\alpha\le \beta\le\alpha+1$,
$$
  S_H^-(\alpha,\beta; y) \leq \chi(\alpha,\beta; y) \le S_H^+(\alpha,\beta; y)
$$
for all real numbers~$y$.
\label{magic functions prop}
\end{proposition}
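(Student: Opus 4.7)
The plan is to reduce both inequalities to a single classical majorization $B_H(x) \geq \psi(x)$ for all $x \in \R$, where $\psi(x) = \{x\} - \tfrac12$ denotes the sawtooth function (with the convention $\psi(n) = 0$ at integers).

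First I would establish the sawtooth identity
\[
\chi(\alpha,\beta;y) = (\beta - \alpha) + \psi(\alpha - y) - \psi(\beta - y)
\]
by a direct computation with fractional parts; this is valid for $y$ not congruent to $\alpha$ or $\beta$ modulo~1, and the remaining measure-zero set of boundary points is handled at the end by continuity of $S_H^\pm$. Subtracting the identity from the definitions of $S_H^\pm$ and using $\psi(-x) = -\psi(x)$ off the integers, one obtains the symmetric rearrangements
\begin{align*}
S_H^+(\alpha,\beta;y) - \chi(\alpha,\beta;y) &= \bigl[B_H(\alpha - y) - \psi(\alpha - y)\bigr] + \bigl[B_H(y - \beta) - \psi(y - \beta)\bigr], \\
\chi(\alpha,\beta;y) - S_H^-(\alpha,\beta;y) &= \bigl[B_H(y - \alpha) - \psi(y - \alpha)\bigr] + \bigl[B_H(\beta - y) - \psi(\beta - y)\bigr].
\end{align*}
Since the arguments $\alpha - y$, $y - \beta$, $y - \alpha$, $\beta - y$ range independently over all of $\R$, both inequalities of the proposition reduce to the single pointwise bound
\[
B_H(x) \geq \psi(x) \quad \text{for all } x \in \R. \qquad (\star)
\]

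Next I would verify $(\star)$, which is Selberg's classical majorization of the sawtooth. In the definition of $B_H$, the second sum is $\tfrac{1}{2(H+1)}$ times the nonnegative Fejér kernel, while the sine coefficients $\tfrac{1}{H+1} f(h/(H+1))$ are designed precisely so that $B_H$ interpolates $\psi$ with multiplicity two at the points $x = j/(H+1)$, $j = 1, \dots, H$, modulo~1. With these double contacts built in, the product $\sin^2 \pi(H+1) x \cdot (B_H(x) - \psi(x))$ is a nonnegative trigonometric polynomial of controlled degree, which forces $B_H(x) \geq \psi(x)$ everywhere. An equivalent route identifies $B_H$ with the $1$-periodization of a Fejér-smoothing of Beurling's extremal majorant of $\mathrm{sgn}$ on $\R$, which visibly sits above $\psi$.

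The main obstacle is $(\star)$ itself: the specific choice $f(y) = -(1 - y)\cot\pi y - 1/\pi$ is exactly the correction dictated by the double-contact interpolation at the points $j/(H+1)$, and a softer Fourier-positivity argument does not suffice to produce the sign. Once $(\star)$ is in hand, the sandwich in the proposition is immediate from the algebraic rearrangement above.
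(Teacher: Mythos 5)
Your reduction to the single inequality $B_H(x) \geq \psi(x)$ is correct and is exactly how the proof proceeds in the cited source (Montgomery, \emph{Ten Lectures}, Chapter~1); the sawtooth identity $\chi(\alpha,\beta;y) = (\beta-\alpha) + \psi(\alpha - y) - \psi(\beta - y)$ holds for $\alpha - y, \beta - y \notin \Z$, and the two rearrangements you give are accurate. The paper itself does not prove the proposition at all --- it is disposed of with a one-sentence citation to Montgomery --- so you have supplied considerably more detail than the authors do.

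That said, one step of your sketch of $(\star)$ as written is not correct: the function $\sin^2\bigl(\pi(H+1)x\bigr)\bigl(B_H(x) - \psi(x)\bigr)$ is \emph{not} a trigonometric polynomial, because $\psi$ itself is not one (its Fourier series does not terminate). The double-contact interpolation of $\psi$ at the points $j/(H+1)$ is a genuine feature of Vaaler's construction, but it does not by itself yield a trig-polynomial positivity argument of the kind you describe. The ``equivalent route'' you mention at the end --- periodizing and Fej\'er-truncating Beurling's extremal majorant of $\mathrm{sgn}$ --- is the one that actually works and is what the literature does; if you pursue the proof of $(\star)$ you should follow that route (or simply cite Vaaler or Montgomery for it, as the paper does for the whole proposition), rather than rely on the $\sin^2$-product claim.
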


\begin{proof}
This is the fundamental property of the ``Selberg magic functions'' $S_H^\pm$; see \cite[chapter 1]{montgomery} for an exposition which uses the notation we have employed.
\end{proof}

Note that the definition~\eqref{ZN def} of $Z_N$ can be written as $Z_N = \sum_{n\le N} \chi(\alpha_n,\beta_n ; u_n)$. Following the approach to proving the standard Erd\H os--Tur\'an inequality, we use Proposition~\ref{magic functions prop} to write the upper bound
\begin{equation*}
  Z_N \le \sum_{n\le N} S_H^+ (\alpha_n,\beta_n ; u_n) = \sum_{n\le N} \bigg( \beta_n - \alpha_n + \frac1{H+1} + \sum_{1\le |h|\le H} \hat S_H^+(h) e(hu_n) \bigg)
\end{equation*}
(where we have singled out the constant term $\hat S_H^+(0)$ of $S_H^+$); the definition~\eqref{DN def} of $D_N$ thus yields
\begin{align*}
  D_N &\le \frac N{H+1} + \sum_{n\le N} \sum_{1\le |h|\le H} \hat S_H^+(h) e(hu_n) \\
  &= \frac N{H+1} + \sum_{n\le N} \sum_{1\le |h|\le H} \big( \hat{B}_H(h) e(-h \beta_n) + \hat{B}_H(-h) e(h\alpha_n) \big) e(h u_n)
\end{align*}
by equation~\eqref{S Fourier coefficients}. Interchanging the order of summation, we get
\begin{equation}
  D_N \le \frac N{H+1} + \sum_{1 \leq |h| \leq H} \bigg( \hat{B}_H(h) \sum_{n\le N} e(h u_n) e(-h \beta_n) + \hat{B}_H(-h) \sum_{n\le N} e(h u_n) e(h \alpha_n) \bigg).
\label{DN upper bound}
\end{equation}
The same calculation using $S_H^-$ instead of $S_H^+$ yields the corresponding lower bound
\begin{equation}
  D_N \ge -\frac N{H+1} - \sum_{1 \leq |h| \leq H} \bigg( \hat{B}_H(-h) \sum_{n\le N} e(h u_n) e(h \beta_n) + \hat{B}_H(h) \sum_{n\le N} e(h u_n) e(-h \alpha_n) \bigg).
\label{DN lower bound}
\end{equation}

If the sequences $\alpha$ and $\beta$ were constant, as in the standard Erd\H os--Tur\'an inequality, then we could now factor out the exponential sum $\sum_{n\le N} e(hu_n)$ and then bound the remaining sum over $h$ at once. Instead, we bound the Fourier coefficients $\hat B_H(h)$ individually and use partial summation to estimate the effect of the varying terms $e(-h\beta_n)$ and $e(-h\alpha_n)$ on the exponential sums.

\subsection{Bounding the Fourier coefficients}

We begin by establishing an inequality between the cotangent function and a rational function that we will use to bound the function $f$ from the previous section.

\begin{lemma}
For $0<y<1$, we have
\[
\pi\cot\pi y + \frac1{1-y} < \frac1y + \frac{3(1-y)}2 - \frac1{2-y}.
\]
\label{little cot lemma}
\end{lemma}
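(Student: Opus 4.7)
The plan is to define $g(y)$ as the right-hand side minus the left-hand side of the claimed inequality, and to show that $g$ extends continuously to $[0,1]$ with $g(0)=g(1)=0$ and is strictly concave on $(0,1)$. The conclusion $g(y)>0$ on $(0,1)$ then follows from the chord characterization of strict concavity.

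First I would verify the boundary values. The function $\pi\cot\pi y$ has a simple pole at $y=0$ whose singular contribution to $g$ is cancelled by the $1/y$ term from the right-hand side, and a simple pole at $y=1$ cancelled by the $-1/(1-y)$ term inherited from the left-hand side. Taylor-expanding $\pi\cot\pi y$ at each endpoint then yields $g(0)=g(1)=0$.

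Next I would compute $g''(y)$ using the partial fraction expansion
\[
\pi^2\csc^2\pi y = \sum_{n\in\Z}\frac{1}{(y-n)^2}.
\]
Differentiating this identity term by term expresses the cotangent contribution to $g''$ as an explicit series. Combining with the elementary derivatives of the rational pieces of $g$, the singular terms $2/y^3$, $-2/(1-y)^3$, and $-2/(2-y)^3$ absorb exactly the $n=0,1,2$ terms of the series, leaving
\[
g''(y) = -\frac{2}{(1+y)^3}-\frac{2}{(2+y)^3} + \sum_{n\ge 3}\left(\frac{2}{(n-y)^3}-\frac{2}{(n+y)^3}\right).
\]

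The main obstacle is showing that this $g''(y)$ is negative on $(0,1)$. Set $\Phi(y)=\sum_{n\ge 3}\bigl((n-y)^{-3}-(n+y)^{-3}\bigr)$ and $\Psi(y)=(1+y)^{-3}+(2+y)^{-3}$, so that $g''(y)=2(\Phi(y)-\Psi(y))$. Term-by-term differentiation shows $\Phi$ is strictly increasing and $\Psi$ strictly decreasing on $[0,1]$. The crucial closing observation is the telescoping evaluation
\[
\Phi(1) = \sum_{n\ge 3}\!\left(\frac{1}{(n-1)^3}-\frac{1}{(n+1)^3}\right) = \frac{1}{2^3}+\frac{1}{3^3} = \Psi(1),
\]
obtained by rearranging the absolutely convergent series for $\zeta(3)$. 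Thus for $y\in[0,1)$ we have the chain $\Phi(y)<\Phi(1)=\Psi(1)<\Psi(y)$, forcing $g''(y)<0$; strict concavity on $[0,1]$ together with $g(0)=g(1)=0$ then completes the proof.
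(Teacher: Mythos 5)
Your argument is correct, but it takes a different route from the paper's. The paper works directly with the Mittag--Leffler expansion
\[
\pi\cot\pi z = \frac1z + \frac1{z+1}+\frac1{z-1} - 2z\sum_{n\ge 2}\frac1{n^2-z^2},
\]
crudely bounds the tail by $\sum_{n\ge2}(n^2-1)^{-1}=\tfrac34$ for $|z|<1$ to get $\pi\cot\pi z > \frac1z+\frac1{z+1}+\frac1{z-1}-\frac{3z}2$, and then substitutes $z=1-y$; it is a single inequality plus a change of variable, with no differentiation. You instead show the difference $g=\mathrm{RHS}-\mathrm{LHS}$ is strictly concave on $(0,1)$ with $g(0^+)=g(1^-)=0$, computing $g''$ from the partial-fraction expansion of $\pi^2\csc^2\pi y$ and closing with the telescoping evaluation $\Phi(1)=\Psi(1)=\tfrac18+\tfrac1{27}$. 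Both proofs are sound; each is really a repackaging of the same partial-fraction identity. The paper's version is shorter and avoids second derivatives. Yours is a little more work but exposes structure the paper's doesn't: the boundary conditions $g(0)=g(1)=0$ and the vanishing $g''(1^-)=0$ explain why the lemma is tight at both endpoints, and in particular why the coefficient $\tfrac32$ (equivalently, the constant $\tfrac34$ appearing as $\sum_{n\ge2}(n^2-1)^{-1}$ in the paper) is exactly the threshold value making $g(0)=0$. One small point worth making explicit in a write-up: your chain gives $g''(y)<0$ only for $y\in[0,1)$, with $g''(1)=0$; that is still strict concavity on $(0,1)$, which together with continuity on $[0,1]$ is all the chord argument needs.
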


\begin{proof}
We start with the classical equality~\cite[equation (4.3.91)]{AS}
\begin{align*}
\pi\cot\pi z &= \frac1z + \sum_{\substack{n\in\Z \\ n\ne0}} \bigg( \frac1{z+n}-\frac1n \bigg) \\
&= \frac1z + \frac1{z+1} + \frac1{z-1} - 2z \sum_{n=2}^\infty \frac1{n^2-z^2} \\
&> \frac1z + \frac1{z+1} + \frac1{z-1} - 2z \sum_{n=2}^\infty \frac1{n^2-1} = \frac1z + \frac1{z+1} + \frac1{z-1} - \frac{3z}2,
\end{align*}
where the inequality is valid for $|z|<1$. Substituting $z=1-y$ yields
\[
-\pi\cot\pi y = \pi\cot(\pi(1-y)) >  \frac1{1-y} + \frac1{2-y} - \frac1y - \frac{3(1-y)}2,
\]
which is equivalent to the statement of the lemma.
\end{proof}

\begin{lemma}
Let $f$ be defined as in equation~\eqref{f def}. For $0<y<1$, we have $|f(y)| < \frac c2\big( \frac1y-1 \big)$, where $c=\frac{16}{7\pi}$.
\label{f bound lemma}
\end{lemma}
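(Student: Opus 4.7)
The target inequality $|f(y)| < \frac{c}{2}\bigl(\frac{1}{y}-1\bigr) = \frac{c(1-y)}{2y}$, after multiplying by $\pi$ and using $c\pi/2 = 8/7$, becomes the two-sided bound
\[
-\frac{8(1-y)}{7y} < \pi f(y) < \frac{8(1-y)}{7y} \quad \text{for } 0 < y < 1,
\]
where $\pi f(y) = -\pi(1-y)\cot\pi y - 1$. My plan is to derive each side by invoking Lemma~\ref{little cot lemma}: the lemma as stated gives an upper bound on $\pi\cot\pi y$, while applying it with $y$ replaced by $1-y$ (and using $\cot\pi(1-y) = -\cot\pi y$) supplies the matching lower bound on $\pi\cot\pi y$.

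For the lower bound on $\pi f(y)$, I will multiply the inequality of Lemma~\ref{little cot lemma} through by $-(1-y) < 0$ (reversing the direction) and subtract $1$. After simplification the left side becomes $\pi f(y)$, and the required bound reduces, after dividing by $(1-y) > 0$, to the elementary inequality
\[
\frac{1}{7y} \ge \frac{3(1-y)}{2} - \frac{1}{2-y}.
\]
Clearing denominators turns this into the cubic polynomial condition $P(y) := 4 + 12y - 21y(1-y)(2-y) \ge 0$ on $[0,1]$, which I will verify by elementary calculus: $P(0) = 4$ and $P(1) = 16$, while $P''(y) = 126(1-y) > 0$, so $P$ is convex on $[0,1)$ with a unique interior critical point given by the root of the quadratic $P'(y)$ in $(0,1)$; one then checks numerically that $P$ is (just barely) positive at this minimum.

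For the upper bound on $\pi f(y)$, I apply Lemma~\ref{little cot lemma} with $y$ replaced by $1-y$ to obtain
\[
\pi\cot\pi y > \frac{1}{y} - \frac{1}{1-y} - \frac{3y}{2} + \frac{1}{1+y},
\]
and then run the same manipulation (multiply by $-(1-y)$, subtract $1$). The remaining algebraic inequality turns out to be considerably looser than its lower-bound counterpart and is straightforward to verify; the slack here reflects the fact that $f$ is in fact negative throughout $(0,1)$.

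The main obstacle is the polynomial inequality in the lower-bound step: the constant $c = 16/(7\pi)$ is specifically engineered so that $P \ge 0$ on $[0,1]$ with only a small positive margin at the interior minimum, so there is no room to replace $c$ by a noticeably smaller value.
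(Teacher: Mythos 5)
Your proposal is essentially the paper's proof. Your lower-bound step reduces to $P(y) = 4 + 12y - 21y(1-y)(2-y) \ge 0$, and $P(y)$ is exactly $-(21y^3-63y^2+30y-4)$, the negative of the cubic $p$ appearing in the paper. The one genuine gap is the verification of this inequality: the minimum of $P$ on $(0,1)$ is $16 - 22\sqrt{231}/21 \approx 0.076$, so your numerical claim is correct, but \emph{``one then checks numerically''} is not a proof, especially when the margin is this thin. You could close it algebraically --- use $P'(y^*)=0$ to reduce $P(y^*)$ to the linear expression $22y^*-6$, then verify $y^* = 1-\sqrt{231}/21 > 3/11$, which amounts to $231 < (168/11)^2$. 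The paper avoids the calculus entirely: $p$ has negative discriminant $-29{,}484$, hence a unique real root, which lies beyond $1$ since $p(1)=-16<0$ and $p\to+\infty$, so $p<0$ on $(-\infty,1)$. Separately, you observe at the very end that $f<0$ on $(0,1)$; if you promote that observation to the beginning, the upper bound $\pi f(y) < \frac{8(1-y)}{7y}$ is immediate (the right-hand side is positive), and your second application of Lemma~\ref{little cot lemma} becomes unnecessary. This is in fact what the paper does silently: writing $\frac{\pi|f(y)|}{1-y}=\pi\cot\pi y+\frac1{1-y}$ is precisely the substitution $|f(y)|=-f(y)$, which is valid only because $f\le0$ --- a fact the paper should arguably have stated.
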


\begin{proof}
It is equivalent to show that
\[
\frac{\pi|f(y)|}{1-y} - \frac8{7y} < 0
\]
for $0<y<1$. By Lemma~\ref{little cot lemma}, we have
\begin{align*}
\frac{\pi|f(y)|}{1-y} - \frac8{7y} &= \bigg( \pi\cot\pi y + \frac1{1-y} \bigg) - \frac8{7y} \\
&< \bigg( \frac1y + \frac{3(1-y)}2 - \frac1{2-y} \bigg) - \frac8{7y} = \frac{21 y^3-63 y^2+30 y-4}{14y(2-y)}.
\end{align*}
The denominator of the right-hand side is obviously positive for $0<y<1$; it suffices to show that the numerator is negative in that range. The polynomial $p(t) = 21 t^3-63 t^2+30 t-4$ has negative discriminant ${-29}$,484, and so it has exactly one real root. Moreover, $p(1) = -16$ and $\lim_{y\to\infty} p(y) = \infty$, so that real root is greater than~1; in particular, $p(y)<0$ for all $y<1$.
\end{proof}

We are now able to bound the Fourier coefficients $\hat B_H(h)$.

\begin{lemma}
Let $B_H$ be defined as in equation~\eqref{BH def}. For $h \neq 0$, we have
$$
  \big\vert \hat{B}_H(h) \big\vert < \frac14 \bigg( \frac{2-c}{H+1} + \frac{c}{|h|} \bigg),
$$
where $c = \frac{16}{7\pi}$.
\label{BHhat bound lemma}
\end{lemma}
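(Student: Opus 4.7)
My plan is to substitute formula~\eqref{BH hat formula} for $\hat B_H(h)$ directly and then feed the resulting expression into Lemma~\ref{f bound lemma}. The case $|h|\ge H+1$ is immediate since $\hat B_H(h)=0$ there, so I would restrict to $1\le |h|\le H$ and set $y=|h|/(H+1)\in(0,1)$. Formula~\eqref{BH hat formula} then reads
\[
  |\hat B_H(h)| = \frac{1}{2(H+1)}\sqrt{(1-y)^2 + f(y)^2}.
\]

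To prepare for applying Lemma~\ref{f bound lemma}, I would decouple the real and imaginary parts with the elementary inequality $\sqrt{a^2+b^2}\le a+|b|$, valid when $a\ge 0$ since squaring gives $(a+|b|)^2 = a^2 + 2a|b| + b^2 \ge a^2+b^2$. Applied with $a=1-y\ge 0$ and $b=f(y)$, this yields $|\hat B_H(h)|\le \frac{1}{2(H+1)}\bigl((1-y)+|f(y)|\bigr)$. Now Lemma~\ref{f bound lemma} provides $|f(y)|<\frac{c}{2}(1/y-1) = c(1-y)/(2y)$, leading to
\[
  |\hat B_H(h)| < \frac{1}{2(H+1)}\left((1-y) + \frac{c(1-y)}{2y}\right).
\]

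The final step is a short algebraic check that the right-hand side is bounded above by $\frac14\bigl(\frac{2-c}{H+1}+\frac{c}{|h|}\bigr)$; substituting $|h|=y(H+1)$, one computes that the difference between these two quantities equals $y/(2(H+1))$, which is strictly positive for $y\in(0,1)$. I do not anticipate any real obstacle in this lemma: the argument is essentially dictated once Lemma~\ref{f bound lemma} and formula~\eqref{BH hat formula} are in hand. The only substantive design choice is the crude decoupling $\sqrt{a^2+b^2}\le a+|b|$; one might worry it discards too much, but the final computation shows it leaves the positive margin $y/(2(H+1))$, so it is exactly sharp enough to recover the stated constants $2-c$ and $c$.
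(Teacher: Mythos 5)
Your proposal is correct and follows essentially the same path as the paper: both start from formula~\eqref{BH hat formula}, apply $\sqrt{a^2+b^2}\le a+|b|$ (with $a=1-y\ge 0$), and then invoke Lemma~\ref{f bound lemma}. The only cosmetic difference is the final algebra: the paper further bounds $1-y$ by $1$ so that the resulting expression rearranges exactly into $\frac14\big(\frac{2-c}{H+1}+\frac{c}{|h|}\big)$, whereas you retain the $1-y$ and verify afterward that the target exceeds your bound by the margin $y/(2(H+1))$.
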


\begin{proof}
From the formula~\eqref{BH hat formula} for the Fourier coefficients of $B_H$, we see that
\begin{align}
  \big\vert \hat{B}_H(h) \big\vert & =
    \frac{1}{2(H+1)} \bigg( \bigg( 1 - \frac{|h|}{H+1} \bigg)^2
    + f \bigg( \frac{|h|}{H+1} \bigg)^2 \bigg)^{1/2} \notag \\
  & \le \frac{1}{2(H+1)} \bigg( 1 - \frac{|h|}{H+1}
    + \bigg\vert f \bigg( \frac{|h|}{H+1} \bigg) \bigg\vert \bigg) \label{two inequalities} \\
  & < \frac{1}{2(H+1)} \bigg( 1 + \frac c2 \bigg(
    \frac{1 - |h|/(H+1)}{|h|/(H+1)} \bigg) \bigg) \notag
\end{align}
by Lemma~\ref{f bound lemma}; this inequality is equivalent to the statement of the lemma.
\end{proof}

\subsection{Finishing the proof of Theorem~\ref{moving target thm}}

At this point, we need only to record the outcome of a partial summation argument to be fully prepared to prove Theorem~\ref{moving target thm}. Recall that $\var_N(s) = \sum_{n=1}^{N-1} \|s_{n+1}-s_n\|$ denotes the total variation modulo~1 of the sequence $s=\{s_n\}$.

\begin{lemma}
For any sequences $\{u_n\}$ and $\{s_n\}$ of real numbers and for any integer $h$,
$$
  \bigg\vert \sum_{n=1}^N e(h u_n) e(-h s_n) \bigg\vert \le \big( 1 + 2\pi |h| \var_N(s) \big) M_N(h),
$$
where $M_N(h)$ is defined in equation~\eqref{MNh def}.
\label{partial summation lemma}
\end{lemma}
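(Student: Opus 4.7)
The plan is a standard Abel (partial) summation, with one small observation about how $\|\cdot\|$ interacts with exponentials.

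First I would set $E_T = \sum_{n=1}^T e(hu_n)$ with $E_0 = 0$, so by the definition~\eqref{MNh def} of $M_N(h)$, one has $|E_T|\le M_N(h)$ for all $1\le T\le N$. Writing $e(hu_n)=E_n-E_{n-1}$ and applying summation by parts,
\[
  \sum_{n=1}^N e(hu_n)e(-hs_n) = E_N e(-hs_N) + \sum_{n=1}^{N-1} E_n\big(e(-hs_n)-e(-hs_{n+1})\big).
\]
Taking absolute values and pulling out $M_N(h)$ gives
\[
  \bigg| \sum_{n=1}^N e(hu_n)e(-hs_n) \bigg| \le M_N(h)\bigg(1 + \sum_{n=1}^{N-1} \big|e(-hs_n)-e(-hs_{n+1})\big|\bigg).
\]

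The remaining step is to relate each difference $|e(-hs_n)-e(-hs_{n+1})|$ to $\|s_{n+1}-s_n\|$. Using $|e(y)-1| = 2|\sin \pi y|$ together with the fact that $|\sin \pi y| \le \pi \|y\|$ for every real $y$ (both sides are $1$-periodic and the inequality is clear on $[-\tfrac12,\tfrac12]$), we obtain
\[
  \big|e(-hs_n)-e(-hs_{n+1})\big| = 2\big|\sin\pi h(s_{n+1}-s_n)\big| \le 2\pi\|h(s_{n+1}-s_n)\| \le 2\pi|h|\,\|s_{n+1}-s_n\|,
\]
where the last inequality follows from $\|h y\|\le |h|\|y\|$ for integers $h$. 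Summing on $n$ gives $\sum_{n=1}^{N-1}\big|e(-hs_n)-e(-hs_{n+1})\big| \le 2\pi|h|\var_N(s)$, and substituting back completes the proof.

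There is no real obstacle here; the only subtle point is remembering that one must bound the exponential differences by the distance modulo~$1$ (not merely by $|s_{n+1}-s_n|$), since the hypothesis on $\var_N(s)$ uses the metric $\|\cdot\|$ on $\R/\Z$. This is precisely what makes the integrality of $h$ essential in the inequality $\|hy\|\le |h|\|y\|$.
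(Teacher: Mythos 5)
Your proof is correct and follows essentially the same route as the paper's: Abel summation to extract $M_N(h)$, then an elementary bound on $|e(-hs_n)-e(-hs_{n+1})|$ in terms of $\|s_{n+1}-s_n\|$. The only cosmetic difference is in that last step — the paper uses periodicity to replace the argument by $h\|s_{n+1}-s_n\|$ and then invokes the mean value bound $|1-e(y)|\le 2\pi|y|$, whereas you go through $|e(y)-1|=2|\sin\pi y|\le 2\pi\|y\|$ and $\|hy\|\le |h|\,\|y\|$; both chains yield the identical inequality.
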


\begin{proof}
Let $E_T(h) = \sum_{n=1}^T e(h u_n)$. Using partial summation, we have
\begin{align*}
  \sum_{n=1}^N e(h u_n) e(-hs_n) & =
    \sum_{n=1}^N \big( E_n(h) - E_{n-1}(h) \big) e(-hs_n) \\
  & = E_N(h) e(-hs_N) - \sum_{n=1}^{N-1} E_n(h)
    ( e(-hs_{n+1}) - e(-hs_n) ),
\end{align*}
and so by the triangle inequality
\begin{align*}
  \bigg\vert \sum_{n=1}^{N-1} e(h u_n) e(-hs_n) \bigg\vert &\le |E_N(h)| + \sum_{n=1}^ {N-1} |E_n(h)| \cdot | e(-hs_{n+1}) - e(-hs_n) | \\
  &\le M_N(h) \bigg( 1 + \sum_{n=1}^{N-1} | e(-hs_{n+1}) - e(-hs_n) | \bigg).
\end{align*}
From elementary properties of the exponential function,
\[
| e(-hs_{n+1}) - e(-hs_n) | = \big| 1 - e\big(h(s_{n+1}-s_n)\big) \big| = \big| 1 - e\big(h\|s_{n+1}-s_n\|\big) \big|;
\]
furthermore, $| 1 - e(y)| \le 2\pi|y|$ by the mean value theorem. We conclude that
\begin{equation*}
  \bigg\vert \sum_{n=1}^{N-1} e(h u_n) e(-hs_n) \bigg\vert \le M_N(h) \bigg( 1 + 2 \pi |h| \sum_{n=1}^{N-1} \| s_{n+1} - s_n \| \bigg),
\end{equation*}
which establishes the lemma.
\end{proof}

\begin{proof}[Proof of Theorem~\ref{moving target thm}]
Beginning with the upper bound~\eqref{DN upper bound} on $D_N$, 
\begin{equation*}
  D_N \le \frac N{H+1} + \sum_{1 \leq |h| \leq H} \bigg( \hat{B}_H(h) \sum_{n\le N} e(h u_n) e(-h \beta_n) + \hat{B}_H(-h) \sum_{n\le N} e(h u_n) e(h \alpha_n) \bigg),
\end{equation*}
we use Lemma~\ref{BHhat bound lemma} to obtain
\begin{equation*}
  D_N \le \frac N{H+1} + \sum_{1 \leq |h| \leq H} \frac14 \bigg( \frac{2-c}{H+1} + \frac{c}{|h|} \bigg) \bigg( \bigg| \sum_{n\le N} e(h u_n) e(-h \beta_n) \bigg| + \bigg| \sum_{n\le N} e(h u_n) e(h \alpha_n) \bigg| \bigg).
\end{equation*}
where $c = \frac{16}{7\pi}$. We now invoke Lemma~\ref{partial summation lemma} to see that
\begin{align*}
  D_N &\le \frac N{H+1} \\
  &\qquad{}+ \sum_{1 \leq |h| \leq H} \frac14 \bigg( \frac{2-c}{H+1} + \frac{c}{|h|} \bigg) \bigg( \big( 1 + 2\pi |h| \var_N(\beta) \big) M_N(-h) + \big( 1 + 2\pi |h| \var_N(\alpha) \big) M_N(h) \bigg) \\
  &\le \frac N{H+1} + \frac14 \sum_{1 \leq |h| \leq H} \bigg( \frac{2-c}{H+1} + \frac{c}{|h|} \bigg) \big( 2 + 2\pi |h| ( \var_N(\alpha) + \var_N(\beta) ) \big) M_N(h),
\end{align*}
since $M_N(-h)=M_N(h)$. At this point the summands for $h$ and $-h$ are equal, and so
\[
  D_N \le \frac N{H+1} + \sum_{h=1}^H \bigg( \frac{2-c}{H+1} + \frac{c}{h} \bigg) \big( 1 + \pi h ( \var_N(\alpha) + \var_N(\beta) ) \big) M_N(h).
\]
This is precisely the upper bound claimed in the statement of Theorem~\ref{moving target thm}; the lower bound is established by exactly the same proof, starting with the lower bound~\eqref{DN lower bound} on $D_N$.
\end{proof}

\subsection{Probing the sharpness of Theorem~\ref{moving target thm}}

The standard Erd\H os--Tur\'an inequality~\cite[equation~(23)]{montgomery} has the form
\[
  |D_N| \le \frac N{H+1} + \sum_{h=1}^H \bigg( \frac{2}{H+1} + \min\Big\{ \beta-\alpha, \frac{2/\pi}{h} \Big\} \bigg) \bigg| \sum_{n=1}^N e(hu_n) \bigg|.
\]
If we restrict $\alpha$ and $\beta$ to be constant sequences in Theorem~\ref{moving target thm}, the conclusion is that
\begin{align*}
  |D_N| &\le \frac N{H+1} + \sum_{h=1}^H \bigg( \frac{2-c}{H+1} + \frac{c}{h} \bigg) M_N(h) \\
  &= \frac N{H+1} + \sum_{h=1}^H \bigg( \frac{2-c}{H+1} + \frac{c}{h} \bigg) \max_{1\le T\le N} \bigg| \sum_{n=1}^T e(hu_n) \bigg|.
\end{align*}
In theory, this conclusion is weaker than the traditional Erd\H os--Tur\'an inequality due to the presence of the maximum; in practice, however, the attainable bounds on the exponential sums that arise are increasing functions of $T$, and so nothing would be lost. (Our method also does not include the possibility of replacing the $\frac ch$ with $\beta-\alpha$, as the latter difference is not independent of $n$.)

Moreover, in its full ``moving target'' generality, the term $M_N(h)$ in Theorem~\ref{moving target thm} cannot be replaced by $\big| \sum_{n=1}^N e(hu_n) \big|$. To see this, we consider the sequence $\{u_n\} = \{n/(N+1)\}$ for some positive integer $N$, and we select the ``obliging target'' intervals bounded by the sequences
\begin{equation}
  \{\alpha_n\} = \{ u_n - 2^{-n} \} \quad\text{and}\quad \{\beta_n\} = \{ u_n + 2^{-n} \}.
\label{obliging}
\end{equation}
The total lengths of these intervals is bounded by~2, but the number of points in the sequence $u$ that lie in the intervals $[\alpha,\beta]$ is~$N$; therefore $D_N \ge N-2$ in this situation. However, the total variations of the sequences $\alpha$ and $\beta$ are bounded by $\frac32$, while the exponential sum $\sum_{n=1}^N e(hu_n) = \sum_{n=1}^N e(hn/(N+1))$ has the exact value of~$-1$ for every integer $1\le h\le N$. Therefore, for any integer $H$ between~1 and~$N$,
\begin{align*}
  \frac{N}{H+1} + \sum_{h=1}^H \big( 1 + \pi h ( \var_N(\alpha) + \var_N(\beta)) \big) & \bigg( \frac{2-c}{H+1} + \frac{c}{h} \bigg) \bigg| \sum_{n=1}^N e(hu_n) \bigg| \\
  &< \frac N{H+1} + \sum_{h=1}^H ( 1 + 3\pi h) \bigg( \frac{2-c}{H+1} + \frac{c}{h} \bigg) \ll \frac NH + H.
\end{align*}
Since the right-hand side can be substantially smaller than $N-2$ (when $H$ is near $\sqrt N$, for instance), it is impossible for the left-hand side to be an upper bound for $D_N$. Theorem~\ref{moving target thm} remains valid in this situation, though, since $M_N(h)$ is approximately $\frac Nh$, whence the right-hand side of equation~\eqref{refer to constants} has order of magnitude $N\log H$.

We can also consider the sequence $\{u_n\} = \{n^\gamma\}$ for some real number $0<\gamma<1$ and the same ``obliging intervals''~\eqref{obliging} as before, so that the discrepancy is again at least $N-2$. The terms on the right-hand side of equation~\eqref{refer to constants} must therefore conspire to give a contribution whose order of magnitude is at least~$N$. The total variations of $\alpha$ and $\beta$ are each $N^\gamma+O(1)$, while the exponential sum $\sum_{n=1}^N e(hn^\gamma)$ can be shown to be asymptotic to $N^{1-\gamma} e(hN^\gamma)/(2\pi i h \gamma)$, which has order of magnitude $N^{1-\gamma}/h$. Therefore the right-hand side of equation~\eqref{refer to constants} has order of magnitude
\[
  \frac NH + \sum_{h=1}^H N^\gamma \frac{N^{1-\gamma}/h}h \sim N\log H,
\]
showing that the bound is both correct and reasonably tight in this case as well.

Both of these examples provide extremely positive discrepancies $D_N$, but simply replacing the ``obliging intervals'' $[\alpha_n,\beta_n]$ with their complements $[\beta_n,\alpha_n+1]$ yields extremely negative disrepancies instead.

Different methods of bounding the Fourier coefficients $\hat B_H(h)$ yield different constants, in equation~\eqref{refer to constants}, in the numerators of the fractions whose denominators are $H+1$ and $h$. The constants $2-c$ and $c$ we have derived above compare favorably to the constants yielded by other methods. Still, we can immediately identify two ways one could improve these constants if such an improvement were desired. First, the constant $\frac c2$ in Lemma~\ref {f bound lemma} is approximately 0.363783; from a numerical study, the best possible constant in this part of the argument would be approximately 0.356113, a difference of about 2\%. Even aside from this, both of the inequalities in equation~\eqref{two inequalities} can be noticeably improved for $|h|$ near~$H$ by elementary means.

\section{Equidistribution of roots of quadratic congruences}
\label{equidist section}

Our goal here is to modify the argument in Hooley's paper to show that the roots of a reducible quadratic are equidistributed in the same sense as his paper.

\subsection{Number of roots of reducible quadratics}

For any polynomial $f$ with integer coefficients, define
\[
\rho_f(m) = \#\{1\le r\le m\colon f(r)\equiv0\mod m\}
\]
to be the number of roots of $f\mod m$ in a block of $m$ consecutive integers. By the Chinese remainder theorem, the function $\rho_f$ is multiplicative for any~$f$. We also use the common notation $\ord_p(n)$ to denote the multiplicity with which the prime $p$ divides $n$, and we write $p^\alpha \parallel n$ when $\ord_p(n)=\alpha$. 

Recall that the {\em content} of a polynomial with integer coefficients is the greatest common divisor of its coefficients. A polynomial with integer coefficients is called {\em primitive} if its content equals~1. Our first lemma allows us to reduce the task of counting roots of polynomials$\mod m$ to the primitive case.

\begin{lemma}
Given $f(t)\in\Z[t]$, let $W$ be the content of $f$ and write $g(t)=\frac1Wf(t)\in\Z[t]$. Let $p$ be a prime, and let $\gamma=\ord_p(W)$. Then for any positive integer $\alpha$,
\[
\rho_f(p^\alpha) = \begin{cases}
p^\alpha, & \text{if } \alpha\le\gamma, \\
p^\gamma \rho_g(p^{\alpha-\gamma}), & \text{if } \alpha>\gamma.
\end{cases}
\]
\label{evict content lemma}
\end{lemma}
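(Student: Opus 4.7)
The plan is to split into the two cases $\alpha \le \gamma$ and $\alpha > \gamma$ and, in each, reduce the congruence $f(r)\equiv 0 \pmod{p^\alpha}$ to something transparent by pulling out the factor $p^\gamma$ from $W$.

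For the easy case $\alpha \le \gamma$, I would simply observe that $p^\alpha \mid p^\gamma \mid W$ and $f(t) = W g(t)$, so $p^\alpha$ divides $f(r)$ for every integer $r$. Hence every one of the $p^\alpha$ residues in $\{1, \dots, p^\alpha\}$ is a root, giving $\rho_f(p^\alpha) = p^\alpha$.

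For the substantive case $\alpha > \gamma$, I would write $W = p^\gamma W'$ with $\gcd(W',p)=1$, so that $f(t) = p^\gamma W' g(t)$. Then the congruence $f(r) \equiv 0 \pmod{p^\alpha}$ is equivalent to $W' g(r) \equiv 0 \pmod{p^{\alpha-\gamma}}$, and since $W'$ is a unit modulo $p^{\alpha-\gamma}$, this is equivalent to $g(r) \equiv 0 \pmod{p^{\alpha-\gamma}}$. Now I need to count solutions in $\{1, \dots, p^\alpha\}$ rather than in $\{1, \dots, p^{\alpha-\gamma}\}$. Since $g(r) \pmod{p^{\alpha-\gamma}}$ depends only on $r \pmod{p^{\alpha-\gamma}}$, each of the $\rho_g(p^{\alpha-\gamma})$ residue classes mod $p^{\alpha-\gamma}$ that give roots lifts to exactly $p^\alpha/p^{\alpha-\gamma} = p^\gamma$ residue classes mod $p^\alpha$, yielding the claimed count of $p^\gamma \rho_g(p^{\alpha-\gamma})$.

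There is essentially no obstacle here; the only minor subtlety is remembering to use that $W' = W/p^\gamma$ is coprime to $p$ (which is exactly what $\gamma = \ord_p(W)$ gives us) so that it may be cancelled from the congruence modulo $p^{\alpha-\gamma}$. The rest is bookkeeping to translate a count modulo $p^{\alpha-\gamma}$ into one modulo $p^\alpha$.
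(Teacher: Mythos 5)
Your proof is correct and follows essentially the same route as the paper: both reduce $f(r)\equiv 0 \pmod{p^\alpha}$ to $g(r)\equiv 0 \pmod{p^{\alpha-\gamma}}$ by cancelling the $p$-part of $W$ and then lift solutions back to the modulus $p^\alpha$. The only difference is cosmetic — the paper cites a general theorem on cancelling a common factor from a linear congruence, while you derive that step directly by writing $W = p^\gamma W'$ with $p\nmid W'$.
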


\begin{proof}
The congruence $f(r) = Wg(r) \equiv 0\mod m$ is equivalent~\cite[Theorem 2.3(1)]{NZM} to $g(r) \equiv 0\mod{\frac m{(W,m)}}$. When $m=p^\alpha$ with $\alpha\le\gamma$, then $(W,m) = p^\alpha$ and the congruence is equivalent to $g(r)\equiv0\mod1$; this is satisfied by every integer, in particular by all $p^\alpha$ of the integers between 1 and $p^\alpha$. When $m=p^\alpha$ with $\alpha>\gamma$, then $(W,m) = p^\gamma$ and the congruence is equivalent to $g(r)\equiv0\mod{p^{\alpha-\gamma}}$; this is satisfied by $\rho_g(p^{\alpha-\gamma})$ integers in every block of $p^{\alpha-\gamma}$ consecutive integers, in particular by $p^\gamma \rho_g(p^{\alpha-\gamma})$ of the integers between 1 and $p^\alpha$.
\end{proof}

In our investigation of the values of $\rho_g$ for primitive reducible quadratics $g$, we will need the following elementary but awkward result.

\begin{lemma}
Fix integers $\alpha$ and $\delta$. Suppose that $y$ and $z$ are integers satisfying either
\begin{enumerate}
\renewcommand{\labelenumi}{(\roman{enumi})}
  \item $y=z < \delta$ or
  \item $\min\{y,z\} = \delta$.
\end{enumerate}
If $\alpha\le2\delta$, then the inequality $y+z\ge\alpha$ is equivalent to $y \ge \lceil \frac\alpha2 \rceil$, while if $\alpha>2\delta$, then the inequality $y+z\ge\alpha$ is equivalent to the conjunction

\smallskip
\centerline{$
\min\{y,z\} = \delta \text{ and } \max\{y,z\}\ge \alpha-\delta.
$}
\label{isolate the annoyance lemma}
\end{lemma}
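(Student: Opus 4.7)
My plan is to prove this by elementary case analysis, splitting first on the two hypotheses (i) and (ii) and then on whether $\alpha \le 2\delta$ or $\alpha > 2\delta$. In each of the four resulting subcases, both the condition $y+z \ge \alpha$ and the asserted right-hand side reduce to simple inequalities in $y, z, \delta, \alpha$, and I would verify by direct inspection that these two reductions agree. The lemma is essentially a bookkeeping step for the later analysis of $\rho_g$ on prime powers, so there is no serious obstacle; the only care needed is (a) invoking integrality of $y$ to convert $y \ge \alpha/2$ into $y \ge \lceil \alpha/2 \rceil$, and (b) relabeling in case (ii), which is permitted because both the hypothesis and both sides of the conclusion are symmetric in $y$ and $z$ there.

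Under hypothesis (i), where $y = z < \delta$, I would write $y+z = 2y$ and $\min\{y,z\} = y < \delta$. When $\alpha \le 2\delta$, the claim becomes $2y \ge \alpha \iff y \ge \lceil \alpha/2 \rceil$, which holds by integrality of $y$. When $\alpha > 2\delta$, I would observe that $y+z = 2y < 2\delta < \alpha$, so the left side is false, and $\min\{y,z\} = y < \delta$, so the right-hand conjunction is also false; both sides of the equivalence are therefore vacuously false.

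Under hypothesis (ii), after relabeling so that $z = \delta$ and $y \ge \delta$, I would write $y+z = y+\delta$, $\min\{y,z\} = \delta$, and $\max\{y,z\} = y$. When $\alpha \le 2\delta$, the automatic inequalities $y+\delta \ge 2\delta \ge \alpha$ and $y \ge \delta \ge \lceil \alpha/2 \rceil$ make both sides of the asserted equivalence hold trivially. When $\alpha > 2\delta$, the inequality $y+\delta \ge \alpha$ rearranges to $y \ge \alpha-\delta$, which is exactly $\max\{y,z\} \ge \alpha-\delta$, while $\min\{y,z\} = \delta$ is given by hypothesis; thus both sides coincide with the stated conjunction. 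Together these four subcases exhaust every possibility and establish the lemma.
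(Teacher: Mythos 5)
Your proof is correct and takes essentially the same elementary case analysis as the paper, merely nesting the splits in the opposite order (hypothesis first, then $\alpha$ versus $2\delta$). One small imprecision: in case~(ii) with $\alpha \le 2\delta$ the target condition $y \ge \lceil \alpha/2 \rceil$ is not symmetric in $y$ and $z$, so your ``relabel by symmetry'' rationale does not strictly apply there---but this is harmless, since your own observation that both sides hold automatically (because $y \ge \min\{y,z\} = \delta \ge \lceil\alpha/2\rceil$) is valid for the original $y$ without any relabeling.
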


\begin{proof}
We begin by assuming that $\alpha\le2\delta$. First suppose that $y \ge \lceil \frac\alpha2 \rceil$, which is equivalent to $y \ge \frac\alpha2$ since $y$ is an integer. If (i) holds, then indeed $y+z = 2y \ge \alpha$; while if (ii) holds, then $y+z \ge y+\delta \ge \frac\alpha2+\frac\alpha2 = \alpha$. On the other hand, suppose that $y < \lceil \frac\alpha2 \rceil$, which is equivalent to $y < \frac\alpha2$. If (i) holds, then $y+z=2y<\alpha$; while (ii) is impossible due to the contradiction $y<\frac\alpha2\le\delta = \min\{y,z\} \le y$. This establishes the first asserted equivalence.

Now we assume that $\alpha>2\delta$. The condition (i) forces both $y+z<2\delta<\alpha$ and $\min\{y,z\} < \delta$, which makes both sides of the second asserted equivalence false. On the other hand, under condition (ii) the inequalities $y+z\ge\alpha$ and $\max\{y,z\}\ge \alpha-\delta$ are equivalent, since $y+z=\min\{y,z\}+\max\{y,z\}$. This establishes the second asserted equivalence.
\end{proof}

Any primitive reducible quadratic can be written as $g(t) = (at+b)(ct+d)$, where the primitivity implies that $(a,b)=(c,d)=1$. Define $\Delta=|ad-bc|$, and note that the discriminant of $g$ equals~$\Delta^2$. We note that $g$ is the square of a linear polynomial if and only if $\Delta=0$.

\begin{lemma}
Let $g(t) = (at+b)(ct+d)$ where $a,b,c,d\in\Z$ and $(a,b)=(c,d)=1$, and define $\Delta=|ad-bc|$. Assume that $\Delta\ne0$. Let $p$ be a prime, and let $\delta=\ord_p(\Delta)$. Then for any positive integer $\alpha$,
\begin{equation}
\rho_g(p^\alpha) = \begin{cases}
2, & \text{if } p\nmid ac\Delta, \\
0, & \text{if } p\mid ac \text{ and } p\mid\Delta, \\
1, & \text{if } p\mid ac \text{ and } p\nmid\Delta, \\
p^{\lfloor \alpha/2 \rfloor}, & \text{if } p\nmid ac \text{ and } p\mid\Delta \text{ and } \alpha\le 2\delta, \\
2p^\delta, & \text{if } p\nmid ac \text{ and } p\mid\Delta \text{ and } \alpha>2\delta.
\end{cases}
\label{primitive rho formula}
\end{equation}
In particular, $\rho_g(p^\alpha) \le 2p^\delta$.
\label{primitive rho lemma}
\end{lemma}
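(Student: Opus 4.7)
The plan is a case analysis keyed to the pair of $p$-adic valuations $y(r) = \ord_p(ar+b)$ and $z(r) = \ord_p(cr+d)$, depending on whether $p$ divides $a$, $c$, and $\Delta$. Since $\rho_g(p^\alpha)$ counts $r \in \{1,\dots,p^\alpha\}$ with $y(r)+z(r) \ge \alpha$, the one routine counting step I will reuse is that if $p \nmid a$ then there are exactly $p^{\alpha-k}$ residues $r \mod{p^\alpha}$ with $y(r) \ge k$ for each $0 \le k \le \alpha$, because $ar \equiv -b \mod{p^k}$ has a unique solution.

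First I would dispatch the cases with $p \mid ac$. If $p \mid a$ then $(a,b)=1$ forces $p \nmid b$, hence $y(r) \equiv 0$; symmetrically for $p \mid c$. The relation $\Delta = \pm(ad-bc)$ then shows that, when $p \mid a$, divisibility $p\mid\Delta$ is equivalent to $p\mid c$. Consequently, when $p \mid ac$ and $p \mid \Delta$ both factors are units modulo $p$ and $\rho_g(p^\alpha)=0$; when $p \mid ac$ and $p \nmid \Delta$, exactly one of $a, c$ is divisible by $p$ and the congruence collapses to a single linear congruence modulo $p^\alpha$ in the other factor, giving $\rho_g(p^\alpha)=1$.

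For the substantive case $p \nmid ac$, the key is the identity
\[
  c(ar+b) - a(cr+d) = bc - ad,
\]
whose right-hand side has $p$-adic valuation exactly $\delta$ while the summands on the left have valuations $y(r)$ and $z(r)$. The ultrametric inequality then forces $(y(r), z(r))$ to satisfy hypothesis (i) or (ii) of Lemma~\ref{isolate the annoyance lemma}: either $y(r) = z(r) \le \delta$, or else $y(r) \ne z(r)$ and $\min\{y(r), z(r)\} = \delta$. Specializing this dichotomy to $p \nmid \Delta$ (so $\delta=0$) will already recover the first formula $\rho_g(p^\alpha)=2$.

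With the dichotomy in hand, the main count splits on the sign of $\alpha - 2\delta$. When $\alpha \le 2\delta$, Lemma~\ref{isolate the annoyance lemma} reduces $y(r)+z(r) \ge \alpha$ to the single threshold $y(r) \ge \lceil \alpha/2 \rceil$, and the counting step produces $p^{\alpha - \lceil\alpha/2\rceil} = p^{\lfloor \alpha/2 \rfloor}$. When $\alpha > 2\delta$, the condition becomes $\min\{y,z\}=\delta$ together with $\max\{y,z\} \ge \alpha - \delta$; since $\alpha - \delta > \delta$, any $r$ with $y(r) \ge \alpha - \delta$ automatically has $y(r) > \delta$, and the dichotomy then pins $z(r) = \delta$ for free, so this branch contributes $p^\delta$ and the symmetric branch contributes a disjoint $p^\delta$, totalling $2p^\delta$. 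The final inequality $\rho_g(p^\alpha) \le 2p^\delta$ is then a case-by-case check against the five outcomes. I expect the main delicacy to be the passage from the ultrametric identity to the dichotomy above, since it is exactly what makes Lemma~\ref{isolate the annoyance lemma} applicable; once the dichotomy is in place, both regimes of the count are essentially combinatorial.
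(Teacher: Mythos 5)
Your proof is correct and takes essentially the same approach as the paper: the ultrametric inequality applied to $c(ar+b) - a(cr+d) = bc-ad$ produces exactly the dichotomy required by Lemma~\ref{isolate the annoyance lemma}, after which the count reduces to solutions of linear congruences modulo prime powers. The paper instead normalizes the congruence to $r(r+\Delta_1)\equiv 0\pmod{p^\alpha}$ before invoking the ultrametric argument and disposes of $p\nmid\Delta$ as a separate preliminary case, but these are presentational variants; your uniform handling via $\delta = 0$ and your direct work with $\ord_p(ar+b)$, $\ord_p(cr+d)$ are equally valid and mildly cleaner.
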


We remark that since $(a,b)=(c,d)=1$, any prime that divides two of $a$, $c$, and $\Delta$ automatically divides the third. Therefore the second line of the formula~\eqref{primitive rho formula} is the case where $p$ divides both of $a$ and $c$, while the third line is the case where $p$ divides exactly one of $a$ and~$c$.

\begin{proof}
First suppose that $p$ divides both $a$ and $c$. Then the congruence $g(r)\equiv0\mod{p^\alpha}$ implies $bd\equiv0\mod p$, which is a contradiction since $(a,b)=(c,d)=1$. Therefore $\rho_g(p^\alpha)=0$ in this~case.

Next suppose that $p\nmid\Delta$. Since $\Delta$ is an integer linear combination of $ar+b$ and $cr+d$ for any integer $r$, it is impossible for $p$ to divide both such integers simultaneously. Thus the number of roots of $g(r)\equiv0\mod{p^\alpha}$ is simply the sum of the numbers of roots of $ar+b\equiv0\mod{p^\alpha}$ and $cr+d\equiv0\mod{p^\alpha}$. The number of roots modulo $p^\alpha$ of $ar+b\equiv0\mod{p^\alpha}$ is 1 if $p\nmid a$ and 0 if $p\mid a$ (since $p\nmid b$ in the second case), and similarly for the number of roots modulo $p^\alpha$ of $cr+d\equiv0\mod{p^\alpha}$. Therefore $\rho_g(p^\alpha)$ equals 1 if $p$ divides exactly one of $a$ and $c$ and 2 if it divides neither.

Having disposed of the first three cases, for the rest of the proof we may assume that $p\nmid ac$ and $p\mid\Delta$ (so that $\delta\ge1$). The congruence $g(r)\equiv0\mod{p^\alpha}$ is then equivalent to $(r + ba^{-1})(r + dc^{-1}) \equiv 0 \mod{p^\alpha}$. We may translate $r$ without affecting the number of roots modulo $p^\alpha$, so it is equivalent to look at the congruence $r(r + \Delta_1) \equiv 0 \mod{p^\alpha}$, where $\Delta_1 = dc^{-1} - ba^{-1} = \pm\Delta/ac$. To calculate $\rho_g(p^\alpha)$, we thus need to count the number of integers $1\le r\le p^\alpha$ such that $\ord_p(r) + \ord_p(r+\Delta_1) \ge \alpha$.

Note that $p^\delta\parallel\Delta_1$ as well (meaning that $p^\delta$ exactly divides its numerator while $p$ does not divide its denominator). The $p$-adic ultrametric inequality tells us that of the three numbers $\{ \ord_p(r), \ord_p(r+\Delta_1), \delta\}$, the two smallest are equal (or all three are equal). This implies that for any integer $r$, exactly one of the following situations holds:
\begin{itemize}
  \item $\ord_p(r) = \ord_p(r+\Delta_1) < \delta$; or
  \item $\min\{\ord_p(r),\ord_p(r+\Delta_1)\} = \delta$.
\end{itemize}

If $\alpha\le2\delta$, we apply Lemma~\ref{isolate the annoyance lemma} to see that $\ord_p(r) + \ord_p(r+\Delta_1) \ge \alpha$ exactly when $\lceil \frac\alpha2 \rceil \le \ord_p(r)$. The number of such $r$ between 1 and $p^\alpha$ is just the number of multiples of $p^{\lceil \alpha/2 \rceil}$ in that range, which is exactly $p^\alpha/p^{\lceil \alpha/2 \rceil} = p^{\lfloor \alpha/2 \rfloor}$. This settles the fourth case.

On the other hand, if $\alpha\le2\delta$, we apply Lemma~\ref{isolate the annoyance lemma} to see that $\min\{\ord_p(r),\ord_p(r+\Delta_1)\} = \delta$ and $\max\{\ord_p(r),\ord_p(r+\Delta_1)\} \ge \alpha-\delta>\delta$. For such integers $r$, either $r$ or $r+\Delta_1$ must be a multiple of $p^{\alpha-\delta}$ (and not both, since $\alpha-\delta>\delta$). The number of such $r$ between 1 and $p^\alpha$ is therefore exactly $2p^\alpha/p^ {\alpha-\delta} = 2p^\delta$. This settles the fifth and final case.
\end{proof}

\begin{lemma}
Let $f$ be a reducible, nonsquare quadratic polynomial with integer coefficients, and let $D$ be its discriminant. Then $\rho(m) \leq \sqrt{D} \cdot 2^{\omega(m)}$.
\label{D dependence lemma}
\end{lemma}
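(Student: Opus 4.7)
The plan is to reduce to prime powers via multiplicativity and then combine the two preceding lemmas. Write $f = Wg$, where $W$ is the content of $f$ and $g$ is primitive, and factor $g(t) = (at+b)(ct+d)$ with $(a,b)=(c,d)=1$; set $\Delta = |ad-bc|$, which is nonzero because $f$ is not a square. A direct computation with $g(t) = act^2+(ad+bc)t+bd$ gives $\operatorname{disc}(g) = (ad-bc)^2 = \Delta^2$, hence $D = \operatorname{disc}(f) = W^2\Delta^2$ and $\sqrt D = W\Delta$. For each prime $p$, write $\gamma_p = \ord_p(W)$ and $\delta_p = \ord_p(\Delta)$, so that $\ord_p(\sqrt D) = \gamma_p + \delta_p$.

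Next I would establish the prime-power estimate
\[
\rho_f(p^\alpha) \le 2\, p^{\gamma_p + \delta_p} \qquad \text{for every prime power } p^\alpha.
\]
If $\alpha \le \gamma_p$, Lemma~\ref{evict content lemma} gives $\rho_f(p^\alpha) = p^\alpha \le p^{\gamma_p} \le p^{\gamma_p+\delta_p}$, which is even stronger. If $\alpha > \gamma_p$, the same lemma gives $\rho_f(p^\alpha) = p^{\gamma_p}\rho_g(p^{\alpha-\gamma_p})$, and Lemma~\ref{primitive rho lemma} supplies $\rho_g(p^\beta) \le 2p^{\delta_p}$ in every one of its five cases (one only has to check this case-by-case against the formula~\eqref{primitive rho formula}, using $\lfloor\beta/2\rfloor \le \delta_p$ in the fourth case). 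Multiplying gives the claimed bound.

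Finally, since $\rho_f$ is multiplicative (the remark following its definition),
\[
\rho_f(m) = \prod_{p^\alpha \parallel m} \rho_f(p^\alpha) \le \prod_{p\mid m} 2\,p^{\gamma_p + \delta_p} = 2^{\omega(m)} \prod_{p\mid m} p^{\gamma_p+\delta_p} \le 2^{\omega(m)} \prod_{p} p^{\gamma_p+\delta_p} = 2^{\omega(m)} W\Delta = 2^{\omega(m)}\sqrt D,
\]
which is the desired inequality. The only mildly delicate point is the case-by-case verification that the factor of $2$ in $2p^{\delta_p}$ suffices to dominate $\rho_g(p^\beta)$ uniformly in $\beta$; everything else is bookkeeping with the content decomposition.
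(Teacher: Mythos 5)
Your proof is correct and follows the same route as the paper's: reduce to prime powers by multiplicativity, apply Lemma~\ref{evict content lemma} to strip off the content, and use the bound $\rho_g(p^\beta)\le 2p^{\delta_p}$ stated at the end of Lemma~\ref{primitive rho lemma}, noting $\ord_p(\sqrt D)=\gamma_p+\delta_p$. The only difference is that you make the identity $\sqrt D = W\Delta$ and the case-by-case verification of $\rho_g(p^\beta)\le 2p^{\delta_p}$ explicit, both of which the paper treats as already established.
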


\begin{proof}
Note that $D\ne0$ since $g$ is not the square of a linear polynomial by assumption. As noted earlier in this section, we can write $f(t) = W(at+b)(ct+d)$ for some integers $W,a,b,c,d$ with $(a,b)=(c,d)=1$. Let $p$ be any prime, and let $\beta = \ord_p(\sqrt D)$, which is an integer since $D$ is a perfect square. Choose $\gamma$ and $\delta$ such that $p^\gamma \parallel W$ and $p^\delta \parallel (ad-bc)$, so that $\gamma+\delta=\beta$. If $\alpha\le\gamma$, then Lemma~\ref{evict content lemma} tells us that $\rho_f(p^\alpha) = p^\alpha \le p^\gamma \le p^\beta$. If $\alpha>\gamma$, then Lemmas~\ref{evict content lemma} and~\ref {primitive rho lemma} tell us that $\rho_f(p^\alpha) = p^\gamma \rho_f(p^{\alpha-\gamma}) \le p^\gamma\cdot 2p^\delta = 2p^\beta$. In either case we see that $\rho_f(p^\alpha) \le 2p^ {\ord_p(\sqrt D)}$. Then, since $\rho_f$ is multiplicative,
\[
\rho_f(m) = \prod_{p^\alpha\parallel m} \rho_f(p^\alpha) \le \prod_{p\mid m} \big( 2p^{\ord_p(\sqrt D)} \big) = \prod_{p\mid m} p^{\ord_p(\sqrt D)} \prod_{p\mid m} 2 \le \sqrt D \prod_{p\mid m} 2 = \sqrt D\cdot 2^{\omega(m)},
\]
as claimed.
\end{proof}

\subsection{Proof of Theorem~\ref{equidist thm} and related remarks}

We now describe our adaptation of Hooley's argument~\cite{hooley} to the case of reducible quadratic polynomials, which culminates in a proof of Theorem~\ref{equidist thm}. After the proof we make some additional comments on possible improvements to the theorem.

The only changes that need to be made to Hooley's argument~\cite{hooley} involve the differences in the function $\rho_f$ resulting from the fact that $f$ is now reducible. For one thing, the average size of $\rho_f$ is now logarithmic rather than constant; this does not ruin the argument, but it needs to be taken into account. On the other hand, we know $\rho_f$ much more exactly for reducible quadratic polynomials (as Lemma~\ref {primitive rho lemma} shows) than we do for general irreducible polynomials; consequently, we are able to be more explicit in some stages. We also wish to make all dependencies on $h$ and the polynomial $f$ explicit in our upper bounds, for the benefit of anyone wishing to employ Theorem~\ref{equidist thm} in the future. Even so, the number of changes is small relative to the several-page length of Hooley's argument. We have therefore, with apologies to the reader, decided not to include a self-contained proof but rather to indicate the necessary alterations to Hooley's proof. We will use some of the notation therein without definition when defining the notation is superfluous to the current account.

\begin{proof}[Outline of proof of Theorem~\ref{equidist thm}]
We begin by examining Hooley's auxiliary lemmata from~\cite{hooley}. His Lemma 1--Lemma 3 are valid for any polynomial, reducible or irreducible, as the proofs essentially depend only on the Chinese remainder theorem. We will not use his Lemma 4--Lemma 6: these deal with various estimates for the function $\rho_f$, whereas we will simply use the information worked out earlier in this section. Finally, his Lemma 7--Lemma 8 make no reference to the polynomial and thus remain valid in our setting. (We note that since we are dealing always with quadratic polynomials, Hooley's parameters $n$ and $N$ will equal 2 and 4, respectively, for us.) For the remainder of this proof, we use $x$ instead of $y$ so as to conform with the notation in~\cite{hooley}; this is not to be confused with the $x$ that appears in the rest of this paper.

Hooley's estimation proper of $R_f(h,x)$ begins with a certain decomposition~\cite[equation (1)]{hooley}, namely $R_f(h,x) = \Sigma_1 + \Sigma_2$, after which he observes that
\[
\Sigma_2 \ll \sum_{\substack{k\le x \\ k_1 > x^{1/3}}} \rho_f(k).
\]
We employ the upper bound in Lemma~\ref{D dependence lemma} to deduce that
\[
\Sigma_2 \ll \sqrt D \sum_{\substack{k\le x \\ k_1 > x^{1/3}}} 2^{\omega(k)},
\]
which differs from Hooley's estimate only in the presence of the $\sqrt D$ term. Thus no further changes are needed in the estimation of $\Sigma_2$: we obtain~\cite[equation (5)]{hooley}
\[
\Sigma_2 \ll \sqrt D \frac x{\log x},
\]
which will turn out to be smaller than our estimate of $\Sigma_1$.

Subsequently, Hooley derives the upper bound~\cite[equations (6) and (7)]{hooley}
\[
\Sigma_1 \ll \sum_{k_1 \le x^{1/3}} \sqrt{\Sigma_5\Sigma_6}.
\]
In the estimation of $\Sigma_5$, the only modification we need to make is to include a factor of $D$ in the estimate
\[
\rho_f^2(k_2) \ll D\cdot 2^{2\omega(k_2)} \ll D\cdot d_4(k_2)
\]
(the factor of $D$ resulting from applying Lemma~\ref{D dependence lemma} as in the estimation of $\Sigma_2$). Hooley's majorization of $\Sigma_6$ is exactly suitable for our purposes, except that we wish to keep explicit the dependence on $h$, so that we use his equation~(9) rather than his equation~(10). Our resulting version of~\cite[equation (11)]{hooley} is
\[
\Sigma_1 \ll \sqrt D \frac{x(\log\log x)^{5/2}}{\log x} \sum_{k_1\le x^{1/3}} \sqrt{ \frac{2^{\omega(k_1)}(h,k_1)}{k_1 \phi(k_1)} }.
\]
As Hooley does, we extend the range of summation on the right-hand side to all $\ell\le x$ (the notation hides the fact that the sum currently runs over those integers less than $x$ for which a certain divisor $k_1$ is at most $x^{1/3}$). The resulting sum is treated in Lemma~\ref{being anal with h lemma} below, whence we obtain
\[
\Sigma_1 \ll \sqrt D \frac{x(\log\log x)^{5/2}}{\log x} \cdot (\log x)^{\sqrt{2}} \prod_{p\mid h} \bigg( 1+\frac7{\sqrt p} \bigg).
\]
This establishes the theorem.
\end{proof}

Theorem~\ref{equidist thm} gives the estimate $R_f(h,y) \ll_{f,h} y (\log y)^{\sqrt{2}-1} (\log \log y)^{5/2}$ for any nonzero integer~$h$. On the other hand, the number of terms in the exponential sum is $\sum_{m\le y} \rho_f(m) \gg_f y\log y$ by equation~\eqref {rho_f asymptotic} below. In other words, the exponential sum exhibits nontrivial cancellation for all nonzero~$h$. By Weyl's criterion, this is precisely what is needed to show that the normalized roots of $f(t)\equiv0\mod m$ are equidistributed modulo~1.

Note that for a linear polynomial $f(t) = at+b$ with $(a,b)=1$, the normalized roots are not equidistributed modulo~1, since they cluster around the points $\frac sa$ with $(s,a)=1$. A straightforward calculation and an invocation of Ramanujan's sum shows that
\[
  \sum_{k \leq y} \sum_{\substack{a\nu+b \equiv 0 \mod{k} \\ 0 < \nu \leq k}} e^{2\pi i h \nu/k} = y \frac{\phi(a)}a \frac{\mu( a/(h,a) )}{\phi( a/(h,a) )} + O(h \log y),
\]
which is the same order of magnitude as the number of summands $y$, at least for some values of~$h$. On the other hand, the roots of the reducible quadratic $W(at+b)(ct+d)$ modulo~$m$ certainly include the roots of $at+b$ and $ct+d$, and so there will be a contribution to the exponential sum from these roots, whose order of magnitude is~$y$. It seems reasonable to conjecture that the other roots of the quadratic are distributed randomly. For example, with the polynomial $f(t) = t^2-1$, one would conjecture that
\[
  \sum_{k \leq y} \sum_{\substack{\nu^2-1 \equiv 0 \mod{k} \\ 0 < \nu \leq k}} e^{2\pi i h \nu/k} = 2y + O_h(y^{1/2+\ep}).
\]
In any event, we should not expect any estimate of the form $R_f(h,y) = o(y)$ for reducible quadratic polynomials~$f$; therefore the bound in Theorem~\ref{equidist thm} is not too far from what one could prove.


We remark that the dependence on $D$ in the upper bound of Theorem~\ref{equidist thm} could be improved if necessary. There are two places in the proof of Theorem~\ref{equidist thm} where we use Lemma~\ref {D dependence lemma} to simply bound $\rho_f(n)$ by $\sqrt D\cdot 2^{\omega(n)}$; but for many values of $n$ the true size of $\rho_f(n)$ is much closer to $2^{\omega(n)}$. A more precise application of Lemmas~\ref {evict content lemma} and~\ref {primitive rho lemma} would replace the $\sqrt D$ in Theorem~\ref{equidist thm} by a smaller multiplicative function of $D$, one that was $\ll_\ep D^\ep$, for instance. Similarly, the dependence of the upper bound on $h$ could be slightly reduced by using the exact expression derived on the last line of equation~\eqref{better than 7} below.

\section{\dqs}
\label{main proof section}

With these two technical results in place, we can now proceed to the proof of Theorem~\ref{main thm}. It turns out that the analysis hinges on studying a very specific family of \dqs. A {\em\dr} \dq\ is one of the form
\begin{equation}
  \{ a, b, a+b+2r, 4r(a+r)(b+r) \},
\label{drdq def}
\end{equation}
where $a$, $b$, and $r$ are positive integers satisfying $a<b$ and $ab+1=r^2$ (so that $\{a,b\}$ is a Diophantine pair); it is easy to verify that any such quadruple is in fact Diophantine. Let
\[
\text{$Q(x) = {}$the number of \drdqs\ contained in $[1,x]$.}
\]
Dujella proved that almost all \dqs\ are \dr; more precisely, he showed \cite[Section 4]{dujella} that the number of \dqs\ contained in $[1,x]$ is $Q(x) + O(x^{0.292}\log^2x)$. Therefore it suffices to find an asymptotic formula for $Q(x)$.

Define the set
\[
  R(m) = \{ \nu\colon 1 \leq \nu \leq m,\ \nu^2 \equiv 1 \mod{m} \}.
\]
Notice that in any \drdq, we have the congruence $r^2 = ab+1 \equiv1\mod b$. Moreover, the inequality $a<b$ forces $r\le b$ as well, so that $r$ must be an element of $R(b)$. Conversely, any pair $\{r,b\}$ with $r\in R(b)$ gives rise to a \drdq\ by taking $a=(r^2-1)/b$, except that $r=1$ gives rise to $a=0$ which must be excluded. We must therefore count all the pairs $\{r,b\}$, with $r\in R(b)\setminus\{1\}$, such that the largest element $4r(a+r)(b+r)$ of the corresponding \drdq\ is at most~$x$. In other words, if we define the function
$$
  L(a,r,b;x) = \begin{cases}
    1, & \text{if } 4r(a+r)(b+r) \le x, \\
    0, & \text{otherwise},
  \end{cases}
$$
then
\begin{equation}
  Q(x) = \sum_{b\in\N} \sum_{r \in R(b) \setminus \{ 1 \}} L \bigg( \frac{r^2-1}{b}, r, b; x \bigg).
\label{first Q form}
\end{equation}

The obstacle we must overcome is this: whether or not $4r(a+r)(b+r) \le x$ depends heavily on where in the interval $[1,b]$ the congruential root $r$ lies, when $b$ is in the most significant range (around $x^{1/3}$ in size). By replacing the summand in equation~\eqref{first Q form} by upper and lower bounds, Dujella was able \cite[Theorem 3]{dujella} to work out that the order of magnitude of $Q(x)$, and hence of the number of \dqs\ up to~$x$, is $x^{1/3}\log x$. We use our knowledge of the equidistribution of the roots $r\in R(b)$ to show that $Q(x)$ is asymptotically the same as an analogous sum where the numbers $r$ are chosen at random from $[1,b]$.

We use the notation $\rho(m) = \# R(m)$, which is a special case of the notation $\rho_g(m)$ from the last section with $g(t) = t^2-1$. Applying Lemma~\ref{primitive rho lemma} to this polynomial shows that
\begin{equation*}
\rho(p^\alpha) = \begin{cases}
2, & \text{if } p\ne 2 \text{ or } p^\alpha=4, \\
1, & \text{if } p^\alpha=2, \\
4, & \text{if } p=2 \text{ and } \alpha\ge3;
\end{cases}
\end{equation*}
in particular,
\begin{equation}
\rho(m) = \left. \begin{cases}
2^{\omega(m)}, & \text{if } 2\nmid m \text{ or } 2^2\parallel m, \\
2^{\omega(m)-1}, & \text{if } 2^1\parallel m, \\
2^{\omega(m)+1}, & \text{if } 2^3\mid m
\end{cases} \right\} \le 2^{\omega(m)+1}.
\label{special rho formula}
\end{equation}

\subsection{Truncating the infinite sum}

To begin with, we need to truncate the infinite sum in equation~\eqref{first Q form} in a manageable way. It turns out that we can accomplish this by sorting \drdqs\ by $a$ rather than~$b$.

\begin{lemma}
For any real numbers $A,x\ge2$, the number of \drdqs~\eqref{drdq def} satisfying $a\le A$ and $4r(a+r)(b+r) \le x$ is $\ll (Ax)^{1/4} \log A$.
\label{sort by a lemma}
\end{lemma}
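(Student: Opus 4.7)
The plan is to parameterize \drdqs\ by the pair $(a,r)$ rather than by $(r,b)$: since $ab+1=r^2$, specifying $a$ and $r$ (with $a\mid r^2-1$) determines $b=(r^2-1)/a$ and hence the whole quadruple. The counting conditions become $r\in R(a)\cup(R(a)+a)\cup\cdots$ (equivalently $r^2\equiv1\mod a$), the inequality $b>a$ which forces $r>a$, and the size constraint $4r(a+r)(b+r)\le x$.

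First I would use the size constraint together with $a\le r$ and $a\le b$ to isolate the $a$ variable. Because $a+r\ge 2a$ and $b+r\ge 2a$, we get $x\ge 4r(a+r)(b+r)\ge 4a\cdot 2a\cdot 2a = 16a^3$, so every \drdq\ contributing to the count in the lemma satisfies $a\le(x/16)^{1/3}$. We may therefore replace $A$ by $A':=\min\{A,(x/16)^{1/3}\}$; note in particular that $A'\le x^{1/3}$.

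Next, for $a$ fixed I would convert the constraint $4r(a+r)(b+r)\le x$ into an upper bound on $r$ alone. Substituting $b=(r^2-1)/a$ gives
\[
4r(a+r)(b+r) = \frac{4r(a+r)(r^2+ar-1)}{a},
\]
and for $r\ge a\ge1$ this is between $2r^4/a$ and $16r^4/a$. Thus the size condition implies $r\le T(a):=(ax/2)^{1/4}$, so the number of admissible $r$ for each $a$ is at most
\[
\#\{r\le T(a):r^2\equiv1\!\!\mod a\} \le \rho(a)\Bigl(\tfrac{T(a)}{a}+1\Bigr) \ll \rho(a)\Bigl(\tfrac{x}{a^3}\Bigr)^{1/4},
\]
where I use $a\le(x/16)^{1/3}$ to absorb the $+1$ into the main term (since $x/a^3\ge 16$).

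Summing over $a\le A'$ and using $\rho(a)\le 2^{\omega(a)+1}$ from equation~\eqref{special rho formula}, the count is
\[
\ll x^{1/4}\sum_{a\le A'} \frac{2^{\omega(a)}}{a^{3/4}}.
\]
The last ingredient is the standard estimate $\sum_{a\le t}2^{\omega(a)}\ll t\log t$ (a special case of the mean value results to be collected in Section~\ref{mult fn section}); partial summation against $t^{-3/4}$ gives $\sum_{a\le A'}2^{\omega(a)}/a^{3/4}\ll (A')^{1/4}\log A'$, and hence the total count is $\ll (A'x)^{1/4}\log A'\le(Ax)^{1/4}\log A$, as claimed.

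The only mildly delicate step is controlling the $+1$ in the ``number of $r$ per residue class'' estimate; without the reduction $A\le(x/16)^{1/3}$ one would pick up an extra $\sum_{a\le A}\rho(a)\asymp A\log A$, which is acceptable only when $A^3\ll x$. Once that reduction is in place, the argument is essentially a parametrization followed by one partial summation, and no deeper input is required.
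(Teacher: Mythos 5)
Your proof is correct and takes essentially the same route as the paper: parametrize by $(a,r)$ with $a\mid r^2-1$, bound the number of admissible $r$ for each fixed $a$ by $\ll\rho(a)(x/a^3)^{1/4}$, and finish with partial summation over~$a$. The one place you diverge is in handling the potential ``$+1$'' boundary term in the count of roots $r\le T(a)$: you introduce the auxiliary truncation $A'=\min\{A,(x/16)^{1/3}\}$ so that $T(a)/a\ge 1$ and the $+1$ can be absorbed into the main term. The paper avoids this issue entirely by exploiting the constraint $r>a$ (which you had already noted from $a<b$) to write $r=\nu+ak$ with $\nu\in R(a)$ and $k\ge1$; then the number of admissible $k<(x/4a^3)^{1/4}$ is at most $(x/4a^3)^{1/4}$ unconditionally — there is no $k=0$ contribution and the bound is trivially valid even when the quantity is less than~$1$ — so no extra truncation is needed. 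A minor arithmetic slip that does not affect anything: for $r\ge a\ge1$ the lower bound on $4r(a+r)(r^2+ar-1)/a$ is $4r^4/a$, not $2r^4/a$.
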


\begin{proof}
As before, the congruence $r^2 = ab+1 \equiv1\mod a$ shows that $r$ must be congruent to some $\nu\in R(a)$, so that we can write $r=\nu + ak$ for some integer $k$. Furthermore, the inequality $a<b$ forces $a<r$ as well, so that $k\ge1$. Conversely, any such $r$ determines a \drdq\ by taking $b=(r^2-1)/a$. Note that the inequality $4r(a+r)(b+r) \le x$ implies that
\[
  x \ge 4 (\nu+ak)( a + \nu + ak ) \bigg( \frac{ (\nu+ak)^2 - 1 }{a} + \nu + ak \bigg) >  4 (ak) (ak) (ak^2) = 4a^3 k^4;
\]
in other words, it is necessary that $1\le k < (x/4a^3)^{1/4}$. We conclude that for every integer $a$ and every $\nu\in R(a)$, there are at most $(x/4a^3)^{1/4}$ corresponding \drdqs\ contained in $[1,x]$. An upper bound for the number of such quadruples with $a \le A$ is therefore
\begin{align*}
  \sum_{a \le A} \sum_{\nu \in R(a)} \bigg( \frac{x}{4a^3} \bigg)^{1/4}
     =  \sum_{a \le A} \bigg( \frac{x}{4a^3} \bigg)^{1/4} \rho(a) 
   \ll  x^{1/4} \sum_{a < A} \frac{\rho(a)}{a^{3/4}} 
   \ll  x^{1/4} A^{1/4} \log A,
\end{align*}
where the last step uses equation~\eqref {partial summation equation}.
\end{proof}

Let $\psi(x)\ge1$ be any function that tends to infinity as $x \to \infty$, but more slowly than $\log x$ (we will choose a specific function $\psi(x)$ later in equation~\eqref {psi choice}). The following proposition shows that we can truncate the sum in equation~\eqref{first Q form} at
\begin{equation}
  B = B(x) = \lfloor x^{1/3}\psi(x) \rfloor.
\label{B def}
\end{equation}

\begin{proposition}
For any real number $x\ge3$,
\[
  Q(x) = \sum_{b\le B} \sum_{r \in R(b)} L \bigg( \frac{r^2-1}{b}, r, b; x \bigg) + O \bigg( x^{1/3} \Big( \psi(x) + \frac{\log x}{\psi(x)^{1/2}} \Big) \bigg),
\]
where $B$ is defined in equation~\eqref{B def}.
\label{truncated prop}
\end{proposition}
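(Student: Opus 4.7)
The plan is to split the sum in equation~\eqref{first Q form} at $b = B$ and to bound the resulting tail by \emph{sorting \drdqs\ by their smallest element $a$ instead of by $b$}. This pivot is the crux of the argument: when $b$ is large, the relation $r^2 = ab+1$ forces $a$ to be small, so Lemma~\ref{sort by a lemma} yields a sharper estimate than a direct attack on the $b$-sum would.

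First I would handle the partial sum over $b \le B$ by replacing $R(b) \setminus \{1\}$ with $R(b)$. The discrepancy is the contribution from $r = 1$, namely $\sum_{b \le B} L(0, 1, b; x)$, which counts those $b \le B$ with $4(b+1) \le x$; this is trivially at most $B \ll x^{1/3}\psi(x)$, accounting for the first piece of the error term.

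Next, for the tail $b > B$, I would extract an upper bound on $a$ from the constraint $4r(a+r)(b+r) \le x$. Since $a+r \ge r$, $b+r \ge b$, and $r^2 = ab+1 > ab$, we have
\[
  x \ge 4r(a+r)(b+r) \ge 4r^2 b > 4ab^2,
\]
so $a < x/(4b^2) < x/(4B^2) \ll x^{1/3}/\psi(x)^2 =: A$. Applying Lemma~\ref{sort by a lemma} with this value of $A$, the tail contributes
\[
  \ll (Ax)^{1/4} \log A \ll \frac{x^{1/3} \log x}{\psi(x)^{1/2}},
\]
which is exactly the second piece of the error term.

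The main obstacle I anticipate is spotting the switch from ordering by $b$ to ordering by $a$ in the tail analysis; once that idea is in hand, the inequality $4ab^2 < 4r(a+r)(b+r) \le x$ and the invocation of Lemma~\ref{sort by a lemma} follow mechanically. A minor piece of bookkeeping is to confirm that the $r = 1$ term (corresponding to a degenerate ``quadruple'' with $a = 0$) is tracked consistently when passing between $R(b) \setminus \{1\}$ and $R(b)$.
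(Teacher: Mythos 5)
Your proof is correct and takes essentially the same route as the paper's: bound the tail $b > B$ by noting that $x \ge 4r(a+r)(b+r) > 4ab^2 > 4aB^2$ forces $a \ll x^{1/3}/\psi(x)^2$, then invoke Lemma~\ref{sort by a lemma}, and separately absorb the $r = 1$ terms into an $O(B)$ error. The only cosmetic difference is the order in which the two error contributions are handled.
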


\begin{proof}
We begin by bounding the number of \drdqs\ with $b> B$ and $4r(a+r)(b+r) \le x$. Since $r^2 = ab+1$, these inequalities force $x \ge 4r(a+r)(b+r) > 4r^2b > 4(ab)b > 4aB^2$; in other words, every such \dq\ satisfies $a < x/4B^2 \le x^{1/3}/\psi(x)^2$. By Lemma~\ref{sort by a lemma}, the number of such \dqs\ is
\[
\ll \bigg( \frac{x^{1/3}}{\psi(x)^2}\cdot x \bigg)^{1/4} \log \bigg( \frac{x^{1/3}}{\psi(x)^2} \bigg) \le \frac{x^{1/3}\log x}{\psi(x)^{1/2}}.
\]
Therefore equation~\eqref{first Q form} becomes
\begin{align*}
  Q(x) &= \sum_{b\le B} \sum_{r \in R(b) \setminus \{ 1 \}} L \bigg( \frac{r^2-1}{b}, r, b; x \bigg) + O\bigg( \frac{x^{1/3}\log x}{\psi(x)^{1/2}} \bigg) \\
  &= \sum_{b\le B} \sum_{r \in R(b)} L \bigg( \frac{r^2-1}{b}, r, b; x \bigg) + O \bigg( B + \frac{x^{1/3}\log x}{\psi(x)^{1/2}} \bigg),
\end{align*}
which is equivalent to the statement of the proposition.
\end{proof}

\subsection{Invoking equidistribution}

\begin{lemma}
Let $a,b,r$ be positive integers with $a<b$ and $ab+1=r^2$. Then for any real number $x\ge3$, the inequality $4r(a+r)(b+r)\le x$ is equivalent to $\frac rb \le \lambda(b,x)$, where
\begin{equation}
  \lambda(b,x) = \begin{cases}
    1, &\text{if } b \le (x/16)^{1/3}, \\
    \displaystyle \sqrt{\frac{x^{1/2}}{2b^{3/2}} + \frac14}-\frac12 + O\bigg( \frac b{x^{3/4}} \bigg), &\text{if } b > (x/16)^{1/3}.
  \end{cases}
\label{lambda def}
\end{equation}
\label{length function lemma}
\end{lemma}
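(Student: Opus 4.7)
The plan is to eliminate $a$ via $a=(r^2-1)/b$, reduce the inequality to a quadratic in the single variable $u=t(1+t)$ where $t=r/b$, solve it exactly, and then expand the solution asymptotically.

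Setting $t=r/b$ and substituting $a=(r^2-1)/b$, a direct computation gives
\[
4r(a+r)(b+r)=4tb(1+t)\bigl(b^2t(1+t)-1\bigr)=4b^3u^2-4bu,
\]
with $u=t(1+t)$. Hence the target inequality is $u^2-u/b^2\le x/(4b^3)$, equivalent (since $u>0$) to $u\le u_{\max}$, where $u_{\max}:=1/(2b^2)+\sqrt{x/(4b^3)+1/(4b^4)}$. Because $t\mapsto t(1+t)$ is strictly increasing on $(0,\infty)$, this in turn is equivalent to $t\le t_{\max}$ with $t_{\max}=\tfrac12\bigl(-1+\sqrt{1+4u_{\max}}\,\bigr)$. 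My remaining task is to verify that this $t_{\max}$ matches the piecewise formula asserted for $\lambda(b,x)$.

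In Case~1 ($b\le(x/16)^{1/3}$), the bounds $r<b$ (forced by $r^2=ab+1<b^2$) and $a<b$ yield $4r(a+r)(b+r)<16b^3\le x$ and $r/b<1$, so both sides of the equivalence hold unconditionally and taking $\lambda(b,x)=1$ is correct. In Case~2 ($b>(x/16)^{1/3}$), Taylor expansion gives $\sqrt{x/b^3+1/b^4}=x^{1/2}/b^{3/2}+O(1/(x^{1/2}b^{5/2}))$, so
\[
1+4u_{\max}=1+\frac{2x^{1/2}}{b^{3/2}}+\frac{2}{b^2}+O\Bigl(\frac{1}{x^{1/2}b^{5/2}}\Bigr).
\]
Elementary exponent arithmetic shows that, uniformly for $b\ge(x/16)^{1/3}$ and $x\ge3$, both $2/b^2$ and $1/(x^{1/2}b^{5/2})$ are $O(b/x^{3/4})$.

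Writing $\mu:=\sqrt{x^{1/2}/(2b^{3/2})+1/4}-\tfrac12$ for the main term of the asserted formula, one checks directly that $(2\mu+1)^2=1+2x^{1/2}/b^{3/2}$, so $(2t_{\max}+1)^2-(2\mu+1)^2=O(b/x^{3/4})$. Factoring as a difference of squares and dividing by the sum $(2t_{\max}+1)+(2\mu+1)\ge2$ yields $t_{\max}-\mu=O(b/x^{3/4})$, which is precisely the claimed expansion. The only delicate point is the uniform domination of the two error terms by $b/x^{3/4}$ throughout the range $b\ge(x/16)^{1/3}$, $x\ge3$; this reduces to the elementary bounds $1/b^2\ll x^{-1/4}\cdot b/x^{3/4}$ and $1/(x^{1/2}b^{5/2})\ll x^{-11/12}\cdot b/x^{3/4}$, and is the only real bookkeeping hurdle in the proof.
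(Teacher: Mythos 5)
Your proof is correct, and it refines the paper's argument in a way worth noting. Both proofs begin identically: substitute $a=(r^2-1)/b$, set $t=r/b$, and reduce to an inequality in $t$. The paper leaves the inequality in the form $s^2(1+s)^2 \le \frac{x}{4b^3}+\frac{s(1+s)}{b^2}$ and treats the $s$-dependent term on the right as a multiplicative perturbation $\big(1+O(b/x)\big)$ of $\frac{x}{4b^3}$; it then solves the resulting $s$-free inequality and pushes the $O$-term through the two nested square roots. Your version instead introduces $u=t(1+t)$, so that the constraint becomes an \emph{exact} quadratic $u^2-u/b^2\le x/(4b^3)$ in the single variable $u$; you solve this exactly for $u_{\max}$, invert the increasing map $t\mapsto t(1+t)$ to get an explicit threshold $t_{\max}$, and only then do the asymptotic expansion. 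This cleanly sidesteps the mildly self-referential step in the paper where $s$ appears on both sides of the inequality, and it makes unmistakable that $\lambda(b,x)$ can be taken to equal the exact threshold $t_{\max}$. The final expansion is also handled a bit differently: the paper factors $(1+O(b/x))$ through the square roots, while you compare $(2t_{\max}+1)^2$ with $(2\mu+1)^2$ and use a difference of squares, which localizes all the bookkeeping to one clean step. Your Case~1 argument (bounding $4r(a+r)(b+r)<16b^3\le x$ directly via $r<b$, $a<b$) is likewise an equivalent but more elementary variant of the paper's comparison $s^2(1+s)^2\le 4\le x/(4b^3)$. All the exponent arithmetic you flag as the ``one real bookkeeping hurdle'' checks out on the stated range $b>(x/16)^{1/3}$, $x\ge 3$.
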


\begin{proof}
The inequality $4r(a+r)(b+r) \le x$ is the same as
$$
  \frac{r}{b} \bigg( \frac{r^2}{b^2} + \frac{r}{b} - \frac{1}{b^2} \bigg)
  \bigg( 1 + \frac{r}{b} \bigg) \le \frac{x}{4b^3}.
$$
If we set $s = \frac{r}{b}$, which is the normalized root used in the statement of our equidistribution result, then this becomes
\begin{equation}
  s^2 (1+s)^2 \le \frac{x}{4b^3} + \frac{s(1+s)}{b^2} = \frac{x}{4b^3} \bigg( 1 + O\bigg( \frac b{x} \bigg) \bigg).
  \label{changed to s}
\end{equation}
Note that $s\le 1$ and so the left-hand side is at most~4. If it happens that $b \le (x/16)^{1/3}$, then the first term on the right-hand side is at least~4, so that the inequality always holds; setting $\lambda(b,x)=1$ is therefore valid in this region. Otherwise, for any positive real number $y$, we see by completing the square that $s^2(1+s)^2 \le y$
if and only if
$
  s \le (\sqrt y + 1/4)^{1/2} - 1/2.
$
Using this equivalence, the inequality~\eqref{changed to s} becomes
\begin{align*}
  s &\le \bigg( \sqrt{\frac{x}{4b^3} + \frac{s(1+s)}{b^2}} + \frac{1}{4} \bigg)^{1/2} -\frac{1}{2} \\
  &= \bigg( \sqrt{\frac{x}{4b^3} \bigg( 1 + O\bigg( \frac b{x} \bigg) \bigg)} + \frac{1}{4} \bigg)^{1/2} -\frac{1}{2} \\
  &= \bigg( \sqrt{\frac{x}{4b^3}} + \frac{1}{4} \bigg)^{1/2} \bigg( 1 + O\bigg( \frac b{x} \bigg) \bigg) -\frac{1}{2} \\
  &= \bigg( \sqrt{\frac{x}{4b^3}} + \frac{1}{4} \bigg)^{1/2} -\frac{1}{2} + O\bigg( \frac b{x^{3/4}} \bigg),
\end{align*}
since $b\ge1$. This establishes the lemma in the case where $b > (x/16)^{1/3}$.
\end{proof}

\begin{proposition}
For any real number $x\ge3$,
\begin{equation*}
 Q(x) = \sum_{b\le B} \rho(b) \lambda(b,x) + O \bigg( x^{1/3}\psi(x) (\log x)^{\sqrt{2}/2} (\log \log x)^{5/4} + \frac{x^{1/3} \log x}{\psi(x)^{1/2}} \bigg),
\end{equation*}
where $B$ is defined in equation~\eqref{B def}.
\label{probabilistic model prop}
\end{proposition}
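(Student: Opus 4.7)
The plan is to recognize the error in Proposition~\ref{truncated prop} as a discrepancy to which Corollary~\ref{moving target cor} applies. By Lemma~\ref{length function lemma}, $L((r^2-1)/b,r,b;x) = 1$ precisely when $r/b \le \lambda(b,x)$. Enumerate the pairs $(b,r)$ with $1 \le b \le B$ and $r \in R(b)$ lexicographically (first by $b$, then by $r$); set $u_n = r_n/b_n$, $\alpha_n = 0$, $\beta_n = \lambda(b_n,x)$, and $N = \sum_{b \le B} \rho(b)$. Then
\[
D_N = \sum_{b \le B} \sum_{r \in R(b)} L\!\left(\tfrac{r^2-1}{b}, r, b; x\right) - \sum_{b \le B} \rho(b)\lambda(b,x)
\]
is exactly the quantity to estimate. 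Since $\lambda(b,x)$ is non-increasing in $b$, both $\alpha_n$ and $\beta_n$ are monotone with $|\beta_N - \beta_1| \le 1$, so Corollary~\ref{moving target cor} yields $|D_N| \ll N/H + \sum_{h=1}^H M_N(h)$ for every positive integer $H$.

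Next I bound $M_N(h)$ via Theorem~\ref{equidist thm} applied to $f(t) = t^2-1$ (for which $\sqrt D = 2$). An initial segment of length $T$ of the lexicographic enumeration differs from a full union of blocks $\{(b,r): b \le b_0\}$ by fewer than $\rho(b_0) = O(b_0^{\ep})$ terms, and the full union contributes exactly $R_f(h,b_0)$; hence
\[
M_N(h) \ll \prod_{p \mid h}\!\left(1 + \tfrac{7}{\sqrt p}\right) B (\log B)^{\sqrt 2 - 1}(\log\log B)^{5/2}.
\]
The multiplicative function $g(h) = \prod_{p \mid h}(1 + 7/\sqrt p)$ has Dirichlet series $\zeta(s)\prod_p(1 + 7/p^{s+1/2})$, whose Euler product converges absolutely at $s = 1$; a standard mean-value estimate (deferred to Section~\ref{mult fn section}) then gives $\sum_{h \le H} g(h) \ll H$ with no stray logarithm.

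Combining the above with $N \ll B \log B$ (which follows from~\eqref{special rho formula} by partial summation),
\[
|D_N| \ll \frac{B\log B}{H} + H \cdot B (\log B)^{\sqrt 2 - 1}(\log\log B)^{5/2}.
\]
Balancing the two terms by choosing $H \asymp (\log B)^{1 - \sqrt 2/2}(\log\log B)^{-5/4}$ produces $|D_N| \ll B (\log B)^{\sqrt 2/2}(\log\log B)^{5/4}$. Substituting $B \ll x^{1/3}\psi(x)$ and adding the error already recorded in Proposition~\ref{truncated prop} yields the proposition.

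The main obstacle I anticipate is technical: ensuring that the $O(b/x^{3/4})$ error in the formula~\eqref{lambda def} does not spoil the strict discrepancy interpretation (this perturbation of the threshold contributes a trivially negligible amount to the count), and verifying the mean-value bound $\sum_{h \le H} g(h) \ll H$ with no extraneous logarithm, since any stray factor of $\log H$ would degrade the exponent $\sqrt 2/2$ in the final error term.
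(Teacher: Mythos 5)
Your argument reproduces the paper's proof step for step: the same lexicographic enumeration of $(b,r)$, the same reduction of the count to a discrepancy $D_N$ with fixed lower endpoint $0$ and monotone moving upper endpoint $\lambda(b,x)$, the same appeal to Corollary~\ref{moving target cor}, the same bound on $M_N(h)$ via Theorem~\ref{equidist thm} with $f(t)=t^2-1$ (including the correct observation that truncating mid-block costs only $O(\rho(b_0))$), the same mean-value estimate $\sum_{h\le H}\prod_{p\mid h}(1+7/\sqrt p)\ll H$, and the same balancing choice of $H$. The only cosmetic difference is that you phrase the mean-value input in Dirichlet-series language rather than citing Lemma~\ref{h payoff lemma} directly; the content is identical.
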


\begin{proof}
Consider the concatenation of $B$ finite sequences, the $b$th of which consists of the roots of $t^2\equiv1\mod b$ normalized by dividing by $b$; in other words, consider the sequence
$R(1) \cup \frac12R(2) \cup \frac13R(3) \cup \cdots \cup \frac1BR(B)$. This sequence has $S(B)$ elements, where we define
\begin{equation*}
  S(y) = \sum_{b \le y} \rho(b).
\end{equation*}
We will apply the moving-target Erd\H os-Tur\'an inequality, Theorem~\ref {moving target thm}, to this sequence; our target intervals will be $[0,\lambda(b,x)]$ for each element $r/b$ of $\frac1bR(b)$. With this setup, we have
\[
Z_{S(B)} = \sum_{b\le B} \#\{r\in R(b)\colon 0\le \tfrac rb\le \lambda(b,x) \}= \sum_{b\le B} \sum_{r \in R(b)} L \bigg( \frac{r^2-1}{b}, r, b; x \bigg)
\]
by Lemma~\ref {length function lemma}, whence
$$
  D_{S(B)} = \sum_{b\le B} \sum_{r \in R(b)} L \bigg( \frac{r^2-1}{b}, r, b; x \bigg)
    - \sum_{b\le B} \rho(b) (\lambda(b,x) - 0).
$$
In other words,
\[
  Q(x) = \sum_{b\le B} \rho(b) \lambda(b,x) + O \bigg( D_{S(B)} + x^{1/3} \Big( \psi(x) + \frac{\log x}{\psi(x)^{1/2}} \Big) \bigg)
\]
by Proposition~\ref{truncated prop} (recalling that $B$ is defined in equation~\eqref{B def}).

Our intervals $[0,\lambda(b,x)]$ have the property that the lower and upper endpoints each form monotone sequences; the variation in the lower endpoint is~0, while the variation in the upper endpoint is $1-\lambda(B,x) \le 1$. By Corollary~\ref{moving target cor}, we therefore have
\[
  D_{S(B)} \ll \frac{S(B)}H + \big( 1+ 0 + 1 \big) \sum_{h=1}^H M_ {S(B)}(h) \ll \frac {S(B)}H + \sum_{h=1}^H M_ {S(B)}(h)
\]
for any positive integer $H$; we will choose $H$ later in~\eqref{H choice}. By Lemma~\ref{rho asymptotic lemma}, the first term satisfies
\[
\frac {S(B)}H \ll \frac{B\log B}H \ll \frac{x^{1/3}\psi(x)\log x}H.
\]
By Theorem~\ref{equidist thm} applied with $y\le B$ and with $f(t)=t^2-1$, so that $D=2$,
\[
  M_ {S(B)}(h) \ll \prod_{p\mid h} \bigg( 1+\frac7{\sqrt p} \bigg) \cdot B (\log B)^{\sqrt{2}-1} (\log \log B)^{5/2} + \max_{1\le b\le B} \rho(b),
\]
the final term arising because we have to consider what happens if we chop off the exponential sum in the middle of one of the finite sequences $\frac1b R(b)$; here even the crude bound $\rho(b)\le B$ suffices. Consequently, by Lemma~\ref {h payoff lemma} we have
\[
  \sum_{h\le H} M_ {S(B)}(h) \ll H B (\log B)^{\sqrt{2}-1} (\log \log B)^{5/2}.
\]
We conclude that
\begin{align}
  Q(x) &= \sum_{b\le B} \rho(b) \lambda(b,x) + O \bigg( \frac {x^{1/3}\psi(x)\log x} H + HB (\log B)^{\sqrt{2}-1} (\log \log B)^{5/2} + x^{1/3} \Big( \psi(x) + \frac{\log x}{\psi(x)^{1/2}} \Big) \bigg) \notag \\
  &= \sum_{b\le B} \rho(b) \lambda(b,x) + O \bigg( \frac {x^{1/3}\psi(x)\log x} H + Hx^{1/3}\psi(x) (\log x)^{\sqrt{2}-1} (\log \log x)^{5/2} + \frac{x^{1/3} \log x}{\psi(x)^{1/2}} \bigg).
\end{align}

To optimize this error term, we choose $H$ so that the first two terms are the same size.  This choice turns out to be
\begin{equation}
  H = \big\lceil (\log x)^{1-\sqrt{2}/2} (\log \log x)^{-5/4} \big\rceil,
\label{H choice}
\end{equation}
and with this choice, we get
\begin{equation*}
 Q(x) = \sum_{b\le B} \rho(b) \lambda(b,x) + O \bigg( x^{1/3}\psi(x) (\log x)^{\sqrt{2}/2} (\log \log x)^{5/4} + \frac{x^{1/3} \log x}{\psi(x)^{1/2}} \bigg).
\end{equation*}
\end{proof}

\subsection{Calculating the weighted sum}

We evaluate the sum
$$
  \sum_{b \le B} \rho(b) \lambda(b,x)
$$
by partial summation, recalling that $B=\lfloor x^{1/3}\psi(x) \rfloor$ where $\psi(x)$ tends to infinity but more slowly than $\log x$; we also recall the notation $S(y) = \sum_{b \le y} \rho(b)$. After working hard to evaluate the eventual leading constant in closed form, we finally finish the proof of Theorem~\ref{main thm} at the end of this section.

\begin{proposition}
For any real number $x\ge3$,
\[
  \sum_{b\le B} \rho(b) \lambda(b,x) = \frac{3x^{1/2}}4 \int_{(x/16)^{1/3}}^\infty \bigg( 1+\frac{2x^{1/2}}{t^{3/2}} \bigg)^{-1/2} \frac{S(t)}{t^{5/2}} \, dt + O\bigg( \frac{x^{1/3}\log x}{\psi(x)^{1/2}} \bigg).
\]
\label{partial summation prop}
\end{proposition}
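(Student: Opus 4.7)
The plan is to convert the discrete sum into an integral by Abel summation, after first unifying the piecewise definition of $\lambda(b,x)$. Define
\[
g(t) = \sqrt{\tfrac{x^{1/2}}{2t^{3/2}} + \tfrac14} - \tfrac12
\]
and let $b_0 = (x/16)^{1/3}$. A direct differentiation gives
\[
g'(t) = -\frac{3x^{1/2}}{4\,t^{5/2}}\bigg(1 + \frac{2x^{1/2}}{t^{3/2}}\bigg)^{-1/2},
\]
which is, up to sign, exactly the integrand appearing in the proposition. Moreover, $x^{1/2}/(2b_0^{3/2}) = 2$, so $g(b_0) = \sqrt{9/4} - 1/2 = 1$, and therefore $\lambda(\cdot,x)$ is continuous across the split at~$b_0$. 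On the range $(b_0,B]$ I would write $\lambda(b,x) = g(b) + O(b/x^{3/4})$, while on $[1,b_0]$ I would use the clean identity $\lambda(b,x) = 1 = g(b_0)$.

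Next I would apply Abel summation on $(b_0, B]$ against the smooth weight~$g$, obtaining
\[
\sum_{b_0 < b \le B} \rho(b)\, g(b) = S(B)\, g(B) - S(b_0) + \frac{3x^{1/2}}{4}\int_{b_0}^{B}\bigg(1 + \frac{2x^{1/2}}{t^{3/2}}\bigg)^{-1/2}\frac{S(t)}{t^{5/2}}\,dt,
\]
where the boundary contribution $-S(b_0)g(b_0) = -S(b_0)$ cancels exactly against the portion $\sum_{b\le b_0}\rho(b)\cdot 1 = S(b_0)$ of the original sum. Extending the upper limit of the integral from $B$ to $\infty$ then produces the main term stated in the proposition.

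It remains to estimate three error terms. (i) The tail of the integral from $B$ to $\infty$ is bounded by $x^{1/2}\int_B^\infty S(t)/t^{5/2}\,dt \ll x^{1/2}\log B / B^{1/2} = x^{1/3}\log x / \psi(x)^{1/2}$, using the bound $S(t) \ll t\log t$ coming from Lemma~\ref{rho asymptotic lemma}. (ii) For the upper boundary term, the Taylor expansion $(1/4+\epsilon)^{1/2} = 1/2 + \epsilon + O(\epsilon^2)$ with $\epsilon = x^{1/2}/(2B^{3/2}) = 1/(2\psi(x)^{3/2})$ gives $g(B) \ll \psi(x)^{-3/2}$, whence $S(B)g(B) \ll B\log B / \psi(x)^{3/2} = x^{1/3}\log x / \psi(x)^{1/2}$. (iii) The cumulative effect of the $O(b/x^{3/4})$ approximation of $\lambda$ by $g$ is $\ll x^{-3/4}\sum_{b\le B} b\rho(b) \ll B^2\log B / x^{3/4} = \psi(x)^2\log x / x^{1/12}$, which is negligible since $\psi(x)$ grows more slowly than any power of~$x$. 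All three errors are therefore absorbed into the remainder $O(x^{1/3}\log x / \psi(x)^{1/2})$.

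The main subtle point is the exact cancellation at $b_0$: were it not for the identity $g(b_0) = 1$, the piecewise structure of $\lambda$ would leave behind a stray term of size $S(b_0) \asymp x^{1/3}\log x$ that would overwhelm the intended error. Verifying this boundary match, together with reconciling the closed-form integrand $(1+2x^{1/2}/t^{3/2})^{-1/2}t^{-5/2}$ with the derivative of the square-root expression used to define $\lambda(b,x)$ in the regime $b > b_0$, is the heart of the calculation; everything else is straightforward bookkeeping of error terms against the size of $\psi(x)$.
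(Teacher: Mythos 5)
Your proof is correct and follows essentially the same route as the paper's: both proceed by writing the sum as a Riemann--Stieltjes integral against $S(t)$ and integrating by parts, both exploit the fact that the explicit formula for $\lambda$ reduces to the value~$1$ at the breakpoint $b_0 = (x/16)^{1/3}$, and both extend the resulting integral to infinity and absorb all boundary and tail terms into the stated error. The only difference is cosmetic: you fold the $-\tfrac12$ into the weight $g(t)$ so that the cancellation at $b_0$ appears directly via $g(b_0)=1$, whereas the paper keeps $-\tfrac12(S(B)-S(B_1))$ as a separate bookkeeping term and lets the various $S(B_1)$ and $S(B)$ contributions cancel in a subsequent line.
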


\begin{proof}
For notational convenience in this proof, we set $B_1 = (x/16)^{1/3}$. Using the definition~\eqref {lambda def} of $\lambda(b,x)$, we have
\begin{align}
  \sum_{b\le B} \rho(b) \lambda(b,x) &= \sum_{b\le B_1} \rho(b) + \sum_{B_1<b\le B} \rho(b) \bigg( \sqrt{\frac{x^{1/2}}{2b^{3/2}} + \frac14}-\frac12 + O\bigg( \frac b{x^{3/4}} \bigg) \bigg) \notag \\
  &= S(B_1) + \sum_{B_1<b\le B} \rho(b) \sqrt{\frac{x^{1/2}}{2b^{3/2}} + \frac14} - \tfrac12 ( S(B)-S(B_1) ) + O\bigg( \frac B{x^{3/4}} S(B) \bigg) \notag \\
  &= \sum_{B_1<b\le B} \rho(b) \sqrt{\frac{x^{1/2}}{2b^{3/2}} + \frac14} + \tfrac32S(B_1) -\tfrac12 S(B) + O(1).
\label{pre partial summation}
\end{align}
The remaining sum can be written as a Riemann-Stieltjes integral, to which integration by parts can be applied:
\begin{align}
  \sum_{B_1<b\le B} \rho(b) \sqrt{\frac{x^{1/2}}{2b^{3/2}} + \frac14} &= \int_{B_1}^B \bigg( \frac{x^{1/2}}{2t^{3/2}} + \frac14 \bigg)^{1/2} \, dS(t) \notag \\
  &= S(B) \bigg( \frac{x^{1/2}}{2B^{3/2}} + \frac14 \bigg)^{1/2} - S(B_1) \bigg( \frac{x^{1/2}}{2B_1^{3/2}} + \frac14 \bigg)^{1/2} \label{integrate by parts} \\
  &\qquad{}- \int_{B_1}^B S(t) \frac d{dt} \bigg( \frac{x^{1/2}}{2t^{3/2}} + \frac14 \bigg)^{1/2} \, dt. \notag
\end{align}
We note that
\[
  \bigg( \frac{x^{1/2}}{2B^{3/2}} + \frac14 \bigg)^{1/2} = \bigg( \frac1{2\psi(x)^{3/2}} + \frac14 \bigg)^{1/2} = \frac12+O\bigg( \frac1{\psi(x)^{3/2}} \bigg),
\]
while the similar term with $B$ replaced by $B_1$ evaluates to exactly $\frac32$; we also note that
\[
  \frac d{dt} \bigg( \frac{x^{1/2}}{2t^{3/2}} + \frac14 \bigg)^{1/2} = - \frac{3x^{1/2}}{8t^{5/2}} \bigg( \frac{x^{1/2}}{2t^{3/2}} + \frac14 \bigg)^{-1/2}.
\]
Therefore equation~\eqref{integrate by parts} becomes
\[
  \sum_{B_1<b\le B} \rho(b) \sqrt{\frac{x^{1/2}}{2b^{3/2}} + \frac14} = \big( \tfrac12+O( \psi(x)^{-3/2}) \big) S(B) - \tfrac32 S(B_1) + \tfrac38x^{1/2} \int_{B_1}^B \frac{S(t)}{t^{5/2}} \bigg( \frac{x^{1/2}}{2t^{3/2}} + \frac14 \bigg)^{-1/2} \, dt,
\]
which results in a lot of cancellation in equation~\eqref {pre partial summation}:
\begin{align*}
  \sum_{b\le B} \rho(b) \lambda(b,x) &= \tfrac38x^{1/2} \int_{B_1}^B \frac{S(t)}{t^{5/2}} \bigg( \frac{x^{1/2}}{2t^{3/2}} + \frac14 \bigg)^{-1/2} \, dt + O\bigg( \frac{S(B)}{\psi(x)^{3/2}} \bigg) \\
  &= \tfrac34x^{1/2} \int_{B_1}^B \frac{S(t)}{t^{5/2}} \bigg( \frac{2x^{1/2}}{t^{3/2}} + 1 \bigg)^{-1/2} \, dt + O\bigg( \frac{B\log B}{\psi(x)^{3/2}} \bigg)
\end{align*}
by Lemma~\ref {rho asymptotic lemma}. Finally, we extend the integral to infinity, noting that
\[
  \int_{B}^\infty \frac{S(t)}{t^{5/2}} \bigg( \frac{2x^{1/2}}{t^{3/2}} + 1 \bigg)^{-1/2} \, dt \ll \int_{B}^\infty \frac{t\log t}{t^{5/2}} = 2B^{-1/2}\log B + 4B^{-1/2} \ll \frac{\log B}{B^{1/2}},
\]
so that
\begin{equation*}
  \sum_{b\le B} \rho(b) \lambda(b,x) = \tfrac34x^{1/2} \int_{B_1}^\infty \frac{S(t)}{t^{5/2}} \bigg( \frac{2x^{1/2}}{t^{3/2}} + 1 \bigg)^{-1/2} \, dt + O\bigg( \frac{B\log B}{\psi(x)^{3/2}} + \frac{x^{1/2}\log B}{B^{1/2}} \bigg).
\end{equation*}
Since $B=\lfloor x^{1/3}\psi(x) \rfloor$, both error terms are $\ll x^{1/3}(\log x)/\psi(x)^{1/2}$, which establishes the proposition.
\end{proof}

\begin{lemma}
For any real number $x\ge3$,
\[
  \frac{3x^{1/2}}4 \int_{(x/16)^{1/3}}^\infty \bigg( 1+\frac{2x^{1/2}}{t^{3/2}} \bigg)^{-1/2} \frac{S(t)}{t^{5/2}} \, dt = \frac {2^{2/3}}{\pi^2} x^{1/3}\log x \int_0^1 (1+8u)^{-1/2} u^{-2/3}\, du + O(x^{1/3}).
\]
\label{integral with S(t) lemma}
\end{lemma}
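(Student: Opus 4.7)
The plan is to reduce the integral to a form where the $x$-dependence is isolated through a single change of variables, then feed in the asymptotic for $S(y)$ from Lemma~\ref{rho asymptotic lemma} (which gives $S(y) = \frac{6}{\pi^2} y \log y + O(y)$, consistent with $\rho(b)$ being essentially $2^{\omega(b)}$) and extract the main term.

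First I would substitute $u = x^{1/2}/(4t^{3/2})$, equivalently $t = 2^{-4/3} x^{1/3} u^{-2/3}$. This sends $t = (x/16)^{1/3}$ to $u = 1$ and $t = \infty$ to $u = 0$, and turns $2x^{1/2}/t^{3/2}$ into $8u$. A short differentiation gives $dt/t^{5/2} = -8\,du/(3x^{1/2})$, so that
\[
  \frac{3x^{1/2}}{4} \int_{(x/16)^{1/3}}^\infty \bigg( 1 + \frac{2x^{1/2}}{t^{3/2}} \bigg)^{-1/2} \frac{S(t)}{t^{5/2}}\, dt = 2 \int_0^1 (1+8u)^{-1/2}\, S\big( 2^{-4/3} x^{1/3} u^{-2/3} \big)\, du.
\]
The $x$-dependence now lives entirely inside $S$, and the structural factor $(1+8u)^{-1/2} u^{-2/3}$ required by the right-hand side of the lemma is about to emerge.

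Next I would plug in $S(y) = \frac{6}{\pi^2} y \log y + O(y)$ with $y = 2^{-4/3} x^{1/3} u^{-2/3}$, writing $\log y = \tfrac13 \log x - \tfrac43 \log 2 - \tfrac23 \log u$. The $\tfrac13 \log x$ piece gives
\[
  \frac{12}{\pi^2} \cdot 2^{-4/3} x^{1/3} \cdot \tfrac{1}{3} \log x \int_0^1 (1+8u)^{-1/2} u^{-2/3}\, du = \frac{2^{2/3}}{\pi^2} x^{1/3} \log x \int_0^1 (1+8u)^{-1/2} u^{-2/3}\, du,
\]
since $4 \cdot 2^{-4/3} = 2^{2/3}$. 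This is exactly the claimed main term. The constant $-\tfrac43 \log 2$ and the $-\tfrac23 \log u$ piece contribute $O\big( x^{1/3} \int_0^1 u^{-2/3}(1+|\log u|)\, du \big) = O(x^{1/3})$, since both $u^{-2/3}$ and $u^{-2/3} |\log u|$ are integrable on $(0,1]$. The $O(y) = O(x^{1/3} u^{-2/3})$ error from the asymptotic for $S$ similarly contributes $O\big( x^{1/3} \int_0^1 u^{-2/3}\, du \big) = O(x^{1/3})$. Summing the contributions gives the stated identity.

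The main obstacle is really just careful bookkeeping: the powers of $2$ must conspire so that $4 \cdot 2^{-4/3}$ simplifies to $2^{2/3}$, and one must check that the mild singularity $u^{-2/3}$ at $u = 0$ remains integrable after being multiplied by $|\log u|$ (which it does, being an endpoint of Beta-type). No genuinely new tool beyond the change of variables and the asymptotic for $S$ is required; the technical subtlety already performed in Proposition~\ref{partial summation prop} (extending the integral to infinity, integrating by parts) has done the heavy lifting, and this lemma is the final evaluation step.
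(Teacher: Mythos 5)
Your proposal is correct and takes essentially the same approach as the paper: the same substitution $u = x^{1/2}/(4t^{3/2})$, the same use of $S(y) = \frac{6}{\pi^2} y \log y + O(y)$, and the same split of $\log$ into the $\log x$ main term plus integrable remainders. The only cosmetic difference is the order of operations (you change variables before inserting the asymptotic for $S$, while the paper substitutes the asymptotic first and then changes variables in the resulting main term), which has no mathematical consequence.
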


\begin{proof}
First we use the asymptotic formula for $S(t)$ in Lemma~\ref {rho asymptotic lemma}:
\begin{multline*}
  \int_{(x/16)^{1/3}}^\infty \bigg( 1+\frac{2x^{1/2}}{t^{3/2}} \bigg)^{-1/2} \frac{S(t)}{t^{5/2}} \, dt \\
  {}= \frac6{\pi^2} \int_{(x/16)^{1/3}}^\infty \bigg( 1+\frac{2x^{1/2}}{t^{3/2}} \bigg)^{-1/2} \frac{\log t}{t^{3/2}} \, dt + O\bigg( \int_{(x/16)^{1/3}}^\infty \bigg( 1+\frac{2x^{1/2}}{t^{3/2}} \bigg)^{-1/2} \frac{1}{t^{3/2}} \, dt \bigg).
\end{multline*}
Since $(1+2x^{1/2}/t^{3/2})^{-1/2} < 1$, the last integral is bounded above by $2((x/16)^{1/3})^{-1/2}$. Therefore
\[
  \frac{3x^{1/2}}4 \int_{(x/16)^{1/3}}^\infty \bigg( 1+\frac{2x^{1/2}}{t^{3/2}} \bigg)^{-1/2} \frac{S(t)}{t^{5/2}} \, dt = \frac {9x^{1/2}}{2\pi^2} \int_{(x/16)^{1/3}}^\infty \bigg( 1+\frac{2x^{1/2}}{t^{3/2}} \bigg)^{-1/2} \frac{\log t}{t^{3/2}} \, dt + O(x^{1/3}).
\]
We now make the change of variables $u = x^{1/2}/4t^{3/2}$ in the remaining integral, which yields
\begin{align*}
  \frac {9x^{1/2}}{2\pi^2} \int_{(x/16)^{1/3}}^\infty & \bigg( 1+\frac{2x^{1/2}}{t^{3/2}} \bigg)^{-1/2} t^{-3/2}\log t \, dt \\
  &= \frac {9x^{1/2}}{2\pi^2} \int_1^0 (1+8u)^{-1/2} \frac{4u}{x^{1/2}} \log\frac{x^{1/3}}{2^{4/3}u^{2/3}} \bigg( {-}\frac{x^{1/3}}{3\cdot 2^{1/3}u^{5/3}} \, du \bigg) \\
  &= \frac {2^{2/3}x^{1/3}}{\pi^2} \bigg( (\log x) \int_0^1 (1+8u)^{-1/2}u^{-2/3} \, du + 2 \int_0^1 (1+8u)^{-1/2} \frac{\log4u}{u^{2/3}} \, du \bigg)
\end{align*}
This establishes the lemma, on noting that the last integral converges to some finite constant.
\end{proof}

\begin{lemma}
We have
\[
  \frac {2^{2/3}}{\pi^2} \int_0^1 (1+8u)^{-1/2} u^{-2/3}\, du = \frac{2^{4/3}}{3\Gamma(\tfrac23)^3} \approx 0.338285.
\]
\label{final constant lemma}
\end{lemma}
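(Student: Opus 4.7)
The plan is to reduce the integral $\int_0^1 (1+8u)^{-1/2} u^{-2/3}\,du$ to a classical beta integral by a chain of elementary substitutions, and then invoke the reflection formula for~$\Gamma$. First I would substitute $v=\sqrt{1+8u}$, which removes the square root and turns the integral into $\int_1^3 (v^2-1)^{-2/3}\,dv$. Next I would substitute $v=1+2s$, which sends $[1,3]$ onto $[0,1]$ and factorises $v^2-1=4s(1+s)$, producing
\[
\int_0^1 u^{-2/3}(1+8u)^{-1/2}\,du = 2^{-1/3}\int_0^1 s^{-2/3}(1+s)^{-2/3}\,ds.
\]

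To put this in beta form, I would then substitute $s=t/(1-t)$, so that $1+s=1/(1-t)$ and $ds=dt/(1-t)^2$; the range $s\in[0,1]$ maps to $t\in[0,1/2]$, and the integrand collapses to $t^{-2/3}(1-t)^{-2/3}$. Since this integrand is symmetric under $t\mapsto 1-t$, the integral over $(0,1/2)$ is exactly half the integral over $(0,1)$, which is $B(1/3,1/3)=\Gamma(1/3)^2/\Gamma(2/3)$. Combining,
\[
\int_0^1 u^{-2/3}(1+8u)^{-1/2}\,du = \frac{\Gamma(1/3)^2}{2^{4/3}\,\Gamma(2/3)}.
\]

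Finally, I would apply the reflection formula $\Gamma(1/3)\Gamma(2/3) = \pi/\sin(\pi/3) = 2\pi/\sqrt 3$ to rewrite $\Gamma(1/3)^2 = 4\pi^2/(3\Gamma(2/3)^2)$. Substituting and multiplying by the prefactor $2^{2/3}/\pi^2$ cancels the $\pi$'s and collects the powers of~$2$ as $2^{2/3-4/3+2}=2^{4/3}$, yielding precisely $2^{4/3}/[3\Gamma(2/3)^3]$. None of the individual steps is difficult; the only slight challenge is recognising that this particular chain of substitutions converts the asymmetric weight $(1+8u)^{-1/2}u^{-2/3}$ into a symmetric beta-type integrand. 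Once that is in hand, the remainder is careful bookkeeping of constants.
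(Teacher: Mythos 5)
Your argument is correct, and it takes a genuinely different route from the paper's. The paper recognizes the integral as Euler's integral representation for $\F(\tfrac12,\tfrac13;\tfrac43;-8)$, applies the quadratic transformation $\F(a,b;a+b+\tfrac12;z) = \F(2a,2b;a+b+\tfrac12;\tfrac12-\tfrac12\sqrt{1-z})$ to reduce the argument from $-8$ to $-1$, invokes Kummer's evaluation of $\F(a,b;a-b+1;-1)$ in terms of Gamma values, and then finishes with the duplication and reflection formulas. Your proof instead uses a chain of elementary substitutions --- $v=\sqrt{1+8u}$, then $v=1+2s$, then $s=t/(1-t)$ --- to land on the half-interval beta integral $\int_0^{1/2} t^{-2/3}(1-t)^{-2/3}\,dt$, exploits the symmetry $t\mapsto 1-t$, and closes with the reflection formula alone. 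The key observation that makes the substitutions click (which you flag yourself) is that $v^2-1$ factors as $4s(1+s)$ under $v=1+2s$, and that $s=t/(1-t)$ symmetrizes $s^{-2/3}(1+s)^{-2/3}$ to $t^{-2/3}(1-t)^{-2/3}$. Your approach is shorter and avoids the $\F$ machinery entirely; the paper's approach is more mechanical once one is fluent in the $\F$ transformation and evaluation formulas from the standard tables, and it sidesteps the need to spot the happy coincidences in the substitutions. Both arrive at the same answer $2^{4/3}/3\Gamma(\tfrac23)^3$, and I have verified your constants, including the final power count $2^{2/3+2-4/3}=2^{4/3}$.
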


\begin{proof}
This lemma turns out to be an exercise in mining known results about special functions. We begin with Euler's integral representation~\cite[equation (15.3.1)]{AS} for the hypergeometric function $\F$, valid for all complex numbers $z$ other than real numbers greater than or equal to~1, as long as $\Re c>\Re b>0$:
\[
  \F(a,b;c;z) = \frac{\Gamma(c)}{\Gamma(b)\Gamma(c-b)} \int_0^1 t^{b-1}(1-t)^{c-b-1}(1-tz)^{-a}\, dt
\]
Choosing $a=\frac12$, $b=\frac13$, $c=\frac43$, and $z=-8$, and recalling that $\Gamma(y+1)=y\Gamma(y)$, we see that
\[
  \frac{2^{2/3}}{\pi^2} \int_0^1 (1+8u)^{-1/2} u^{-2/3}\, du = \frac{2^{2/3}}{\pi^2} \frac{\Gamma(\tfrac13) \Gamma(1)}{\Gamma(\tfrac43)} \F\big(\tfrac12, \tfrac13 ; \tfrac43 ; -8\big) = \frac{3\cdot 2^{2/3}}{\pi^2} \F\big(\tfrac12, \tfrac13 ; \tfrac43 ; -8\big).
\]
Next, we apply the quadratic transformation \cite[equation (15.3.22)]{AS}
\[
  \F\big(a,b;a+b+\tfrac12;z\big) = \F\big(2a,2b;a+b+\tfrac12;\tfrac12-\tfrac12\sqrt{1-z}\big)
\]
to obtain
\[
  \frac{3\cdot 2^{2/3}}{\pi^2} \F\big(\tfrac12, \tfrac13 ; \tfrac43 ; -8\big) = \frac{3\cdot 2^{2/3}}{\pi^2} \F\big(1, \tfrac23 ; \tfrac43 ; -1\big).
\]
At this point, we can actually evaluate this special value by the formula~\cite[equation (15.1.21)]{AS}
\[
  \F(a,b;a-b+1;-1) = 2^{-a} \pi^{1/2} \frac{\Gamma(1+a-b)}{\Gamma(1+\tfrac a2-b)\Gamma(\tfrac 12+ \tfrac a2)},
\]
which gives
\[
  \frac{3\cdot 2^{2/3}}{\pi^2} \F\big(1, \tfrac23 ; \tfrac43 ; -1\big) = \frac{3}{2^{1/3}\pi^{3/2}} \frac{\Gamma(\tfrac 43)}{\Gamma(\tfrac 56) \Gamma(1)} = \frac{1}{2^{1/3}\pi^{3/2}} \frac{\Gamma(\tfrac 13)}{\Gamma(\tfrac 56)}.
\]

We now invoke the duplication formula for the Gamma function~\cite[equation (6.1.18)]{AS}, namely $\Gamma(2z) = (2\pi)^{-1/2} 2^{2z-1/2} \Gamma(z) \Gamma\big( z+\tfrac12 \big)$. At $z= \tfrac 13$ this yields $\Gamma(\tfrac56) ={2^{1/3} \pi^{1/2} \Gamma(\tfrac23)/\Gamma(\tfrac13)}$, so that
\[
  \frac{1}{2^{1/3}\pi^{3/2}} \frac{\Gamma(\tfrac 13)}{\Gamma(\tfrac 56) \Gamma(1)} = \frac{1}{2^{2/3}\pi^2} \frac{\Gamma(\tfrac 13)^2}{\Gamma(\tfrac23)}
\]
Finally, the reflection formula for the Gamma function~\cite[equation (6.1.17)]{AS} is $\Gamma(z) \Gamma(1-z) = \pi\csc\pi z$; again choosing $z= \tfrac 13$, we obtain $\Gamma(\tfrac13) =\pi \csc\tfrac\pi3/{\Gamma(\tfrac23)} ={2\pi/3^{1/2} \Gamma(\tfrac23)}$, so that
\[
  \frac{1}{2^{2/3}\pi^2} \frac{\Gamma(\tfrac 13)^2}{\Gamma(\tfrac23)} = \frac{2^{4/3}}{3\Gamma(\tfrac23)^3}
\]
This chain of equalities establishes the lemma.
\end{proof}

\begin{proof}[Proof of Theorem~\ref{main thm}]
By Proposition~\ref {probabilistic model prop},
\begin{equation*}
 Q(x) = \sum_{b\le B} \rho(b) \lambda(b,x) + O \bigg( x^{1/3}\psi(x) (\log x)^{\sqrt{2}/2} (\log \log x)^{5/4} + \frac{x^{1/3} \log x}{\psi(x)^{1/2}} \bigg).
\end{equation*}
By Proposition~\ref {partial summation prop},
\begin{equation*}
 Q(x) = \frac{3x^{1/2}}4 \int_{(x/16)^{1/3}}^\infty \bigg( 1+\frac{2x^{1/2}}{t^{3/2}} \bigg)^{-1/2} \frac{S(t)}{t^{5/2}} \, dt + O \bigg( x^{1/3}\psi(x) (\log x)^{\sqrt{2}/2} (\log \log x)^{5/4} + \frac{x^{1/3} \log x}{\psi(x)^{1/2}} \bigg).
\end{equation*}
At this point we choose
\begin{equation}
\psi(x) = (\log x)^{(2-\sqrt2)/3}(\log\log x)^{-5/6}
\label{psi choice}
\end{equation}
to optimize the error term, yielding
\begin{equation*}
 Q(x) = \frac{3x^{1/2}}4 \int_{(x/16)^{1/3}}^\infty \bigg( 1+\frac{2x^{1/2}}{t^{3/2}} \bigg)^{-1/2} \frac{S(t)}{t^{5/2}} \, dt + O \big( x^{1/3} (\log x)^{2/3+\sqrt{2}/6} (\log \log x)^{5/12} \big).
\end{equation*}
By Lemma~\ref {integral with S(t) lemma},
\begin{equation*}
 Q(x) = \frac {2^{2/3}}{\pi^2} x^{1/3}\log x \int_0^1 (1+8u)^{-1/2} u^{-2/3}\, du + O \big( x^{1/3} (\log x)^{2/3+\sqrt{2}/6} (\log \log x)^{5/12} \big).
\end{equation*}
Finally, by Lemma~\ref {final constant lemma},
\begin{equation*}
 Q(x) = \frac{2^{4/3}}{3\Gamma(\tfrac23)^3} x^{1/3}\log x + O \big( x^{1/3} (\log x)^{2/3+\sqrt{2}/6} (\log \log x)^{5/12} \big).
\end{equation*}
\end{proof}

\section{Sums of multiplicative functions}
\label{mult fn section}

In this section we gather together the facts about sums of multiplicative functions that we used in Sections~\ref{equidist section} and~\ref{main proof section}. All of the specific results we need are special cases of the following asymptotic formula, several variants of which have appeared in the literature.

\begin{proposition}
Let $g(n)$ be a nonnegative multiplicative function. Suppose that there exist real numbers $U$ and $\kappa$ such that $g(p^\alpha)\le U$ for all prime powers $p^\alpha$ and
\[
\sum_{p\le w} \frac{g(p)\log p}p = \kappa\log w + O_g(1)
\]
for all $w\ge2$. Then the asymptotic formula
\[
\sum_{n\le y} \frac{g(n)}n = c(g)\log^\kappa y + O_g(\log^{\kappa-1}y)
\]
holds for all $y\ge2$, where $c(g)$ is the convergent product
\[
c(g) = \frac1{\Gamma(\kappa+1)} \prod_p \bigg(1-\frac1p\bigg)^\kappa \bigg( 1 + \frac{g(p)}p + \frac{g(p^2)}{p^2} + \cdots \bigg).
\]
\label{prop A3a}
\end{proposition}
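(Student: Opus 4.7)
The plan is to use the standard Levin--Fainleib / Selberg--Delange method, which converts the hypothesis on primes into an integral equation for $T(y) := \sum_{n \le y} g(n)/n$. The central identity is obtained by multiplying through by $\log y$, writing $\log y = \log n + \log(y/n)$, and applying partial summation to the $\log(y/n)$ piece:
\[
T(y) \log y = \sum_{n \le y} \frac{g(n) \log n}{n} + \int_1^y \frac{T(t)}{t} \, dt.
\]

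First I would prove an a priori upper bound $T(y) \ll_g \log^\kappa y$ by a Rankin trick: at $s = 1 + 1/\log y$, the Euler product of $\sum g(n)/n^s$ is bounded (via the hypothesis and partial summation, plus the uniform bound $g(p^\alpha) \le U$ to handle prime powers) by $\exp(\kappa \log\log y + O_g(1))$, while $y^{s-1} \ll 1$. Next, I would expand $g(n) \log n$ using the formula $\log n = \sum_{p^\alpha \| n} \alpha \log p$ and the multiplicativity of $g$: writing $n = p^\alpha m$ with $(m,p)=1$,
\[
\sum_{n \le y} \frac{g(n) \log n}{n} = \sum_{p^\alpha \le y} \frac{g(p^\alpha)\,\alpha \log p}{p^\alpha} \sum_{\substack{m \le y/p^\alpha \\ (m,p)=1}} \frac{g(m)}{m}.
\]
The terms with $\alpha \ge 2$ are absolutely summable in $p^\alpha$ (using $g(p^\alpha) \le U$) and contribute $O_g(\log^\kappa y)$ by the a priori bound. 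For the $\alpha = 1$ terms, first replace the coprimality constraint $(m,p)=1$ by unrestricted $m$, at a cost that is again $O_g(\log^\kappa y)$ after summing over $p$. Then use the hypothesis $\sum_{p \le w} g(p) \log p/p = \kappa \log w + O_g(1)$ and partial summation to evaluate the resulting sum as $\kappa \int_1^y T(t)/t\, dt + O_g(\log^\kappa y)$.

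Substituting back yields the integral equation
\[
T(y) \log y = (\kappa + 1) \int_1^y \frac{T(t)}{t} \, dt + O_g(\log^\kappa y),
\]
whose solutions are readily analyzed: writing $F(y) = \int_1^y T(t)/t\,dt$, the equation becomes $y (\log y) F'(y) = (\kappa+1) F(y) + O_g(\log^\kappa y)$, which one integrates (with the change of variables $u = \log t$) to conclude $T(y) = C \log^\kappa y + O_g(\log^{\kappa-1} y)$ for some constant $C$. To identify $C$ with $c(g)$, I would compare with the model case of the generalized divisor function $d_\kappa$, for which the asymptotic $\sum_{n\le y} d_\kappa(n)/n = \log^\kappa y /\Gamma(\kappa+1) + O(\log^{\kappa-1} y)$ is classical. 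Writing $g = d_\kappa \ast h$ as a Dirichlet convolution, the local Euler factor for $h$ at $p$ is $(1-1/p)^\kappa (1 + g(p)/p + g(p^2)/p^2 + \cdots)$; the hypotheses force $\sum_d h(d)/d$ to converge to the product appearing in $c(g)$, and the two asymptotics match.

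The main obstacle is not the structure of the argument but the bookkeeping of error terms. In particular, to obtain the claimed error $O_g(\log^{\kappa-1} y)$ rather than something weaker like $O_g(\log^\kappa y/\log\log y)$, one must carefully integrate the $O_g(1)$ error in the prime-sum hypothesis through partial summation and exploit cancellation in the integral equation; a crude iteration will give only a slightly suboptimal exponent and must be refined. The treatment of the coprimality constraint $(m,p)=1$ and the higher prime powers, while conceptually routine, also needs attention to ensure the uniformity in $g$ indicated by the subscripts.
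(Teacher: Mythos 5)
The paper does not give a proof of this proposition at all: it simply cites Proposition A.3(a) of the reference \cite{behemoth}, after observing that the hypothesis used here (all $g(p^\alpha)\le U$, hence $g(n)\le U^{\omega(n)}\ll_\ep n^\ep$) is strictly stronger than the hypothesis $g(n)\ll n^\alpha$, $\alpha<\frac12$, assumed there. Your proposal is therefore not the paper's route but a self-contained proof sketch via the Levin--Fa\u{\i}nle\u{\i}b/Wirsing method; this is a genuinely different approach in the sense that you actually prove the statement rather than delegate it, and the method you describe is the standard one, almost certainly akin to what \cite{behemoth} itself does. Your identity $T(y)\log y=\sum_{n\le y}g(n)\log n/n+\int_1^y T(t)\,dt/t$, the Rankin a priori bound, the expansion $\log n=\sum_{p^\alpha\|n}\alpha\log p$, and the reduction to the integral/ODE equation $T(y)\log y=(\kappa+1)\int_1^yT(t)\,dt/t+O_g(\log^\kappa y)$ are all correct. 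The worry you flag about ``crude iteration'' losing a factor is in fact unnecessary: once you pass to $G(u)=F(e^u)$, the integrating factor $u^{-(\kappa+1)}$ yields $(G(u)u^{-(\kappa+1)})'=O(u^{-2})$, which converges absolutely and directly produces $G(u)=c'u^{\kappa+1}+O(u^\kappa)$ and hence exactly the stated $O_g(\log^{\kappa-1}y)$ error.

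The one genuinely soft step is the identification of the constant via $g=d_\kappa\ast h$. You assert that ``the hypotheses force $\sum_d h(d)/d$ to converge,'' but with $h(p)=g(p)-\kappa$ the hypothesis controls only $\sum_{p\le w}(g(p)-\kappa)\log p/p=O_g(1)$; this gives (by partial summation) convergence of $\sum_p(g(p)-\kappa)/p$ and hence of the Euler product defining $c(g)$, but it does \emph{not} give absolute convergence of $\sum_d|h(d)|/d$, since $|g(p)-\kappa|$ need not be small pointwise. So plugging the $d_\kappa$ asymptotic into $\sum_d (h(d)/d)\sum_{m\le y/d}d_\kappa(m)/m$ cannot be done naively; you would need a truncation $d\le y^{1/2}$ together with a separate tail estimate, or better, avoid the convolution altogether. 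A cleaner identification: from the already-proved $T(y)=C\log^\kappa y+O(\log^{\kappa-1}y)$ and an Abelian argument one gets $\sum_n g(n)n^{-s}\sim C\,\Gamma(\kappa+1)(s-1)^{-\kappa}$ as $s\to1^+$; on the other hand $\sum_n g(n)n^{-s}=\zeta(s)^\kappa H(s)$ where $H(s)=\prod_p(1-p^{-s})^\kappa(1+g(p)p^{-s}+\cdots)$, and the same partial summation shows $H$ extends continuously to $s=1$ with $H(1)$ equal to the product in the statement; comparing gives $C=H(1)/\Gamma(\kappa+1)=c(g)$.
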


\begin{proof}
This is exactly \cite[Proposition A.3(a)]{behemoth}, except that we have replaced the hypothesis ``$g(n)\ll n^\alpha$ for some constant $\alpha<1/2$'' with ``there exists $U$ such that $g(p^\alpha)\le U$ for all prime powers $p^\alpha$''. This is a strictly stronger hypothesis, however: any nonnegative multiplicative function satisfying $g(p^\alpha)\le U$ automatically satisfies $g(n)\le U^{\omega(n)} \ll_{U,\ep} n^\ep$ for every $\ep>0$.
\end{proof}

In the following lemmas we use the standard notations $\phi(n)$ for the Euler phi-function, $\mu(n)$ for the M\"obius mu-function, and $\omega(n)$ for the number of distinct prime divisors of $n$. Note that $\mu^2$ is the characteristic function of the squarefree integers.

\begin{lemma}
For all $y\ge2$, we have $\displaystyle \sum_{n \leq y} \sqrt{ \frac{2^{\omega(n)}}{n \phi(n)} } \ll (\log y)^{\sqrt2}$.
\label{sqrt2 lemma}
\end{lemma}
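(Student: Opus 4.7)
The plan is to apply Proposition~\ref{prop A3a} directly, after rewriting the summand in the form $g(n)/n$ for an appropriate nonnegative multiplicative function $g$.

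Specifically, I would set
\[
g(n) = \sqrt{\frac{2^{\omega(n)} n}{\phi(n)}},
\]
so that $g(n)/n = \sqrt{2^{\omega(n)}/(n\phi(n))}$, which is the summand. Since $2^{\omega(n)}$, $n$, and $\phi(n)$ are all multiplicative and positive, $g$ is a nonnegative multiplicative function. On prime powers it evaluates to
\[
g(p^\alpha) = \sqrt{\frac{2\,p^\alpha}{p^{\alpha-1}(p-1)}} = \sqrt{\frac{2p}{p-1}},
\]
which is independent of $\alpha$ and bounded above by $2$ (attained at $p=2$); so the uniform bound hypothesis of Proposition~\ref{prop A3a} holds with $U=2$.

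Next I verify the logarithmic-density hypothesis with $\kappa=\sqrt 2$. From $g(p) = \sqrt{2p/(p-1)} = \sqrt 2 \,(1-1/p)^{-1/2}$, a Taylor expansion gives $g(p) = \sqrt 2 + O(1/p)$, so
\[
\sum_{p\le w} \frac{g(p)\log p}{p} = \sqrt 2 \sum_{p\le w}\frac{\log p}{p} + O\!\left(\sum_p \frac{\log p}{p^2}\right) = \sqrt 2 \log w + O(1)
\]
by Mertens's theorem and the convergence of $\sum_p (\log p)/p^2$. Hence the hypotheses of Proposition~\ref{prop A3a} are satisfied with $\kappa=\sqrt 2$, and the proposition yields
\[
\sum_{n\le y}\frac{g(n)}{n} = c(g)(\log y)^{\sqrt 2} + O((\log y)^{\sqrt 2 -1}) \ll (\log y)^{\sqrt 2},
\]
which is the claimed bound.

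There is essentially no obstacle here — the work is really just assembling the correct multiplicative function and checking the two hypotheses. The only place a slip could occur is in the prime-power evaluation of $g$, since one must be careful that the exponent $\alpha$ cancels to give a bound uniform in $\alpha$; this is exactly the simplification $p^\alpha/p^{\alpha-1}=p$ that makes the uniform bound $U=2$ work.
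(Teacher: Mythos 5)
Your proposal is correct and matches the paper's proof essentially step for step: same choice of $g(n) = \sqrt{n\,2^{\omega(n)}/\phi(n)}$, same uniform bound $g(p^\alpha)\le 2$, same Mertens-type estimate giving $\kappa=\sqrt 2$, and the same direct application of Proposition~\ref{prop A3a}. No gaps.
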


\begin{proof}
Define the multiplicative function
$$
  g(n) = \sqrt{\frac{n 2^{\omega(n)}}{\phi(n)}} = \prod_{p\mid n} \sqrt{\frac{2p}{p-1}},
$$
so that the sum we need to estimate is $\sum_{n \leq y}{g(n)}/{n}$. We note that $g(p^\alpha) = \sqrt{2p/(p-1)} \le 2$ for all prime powers $p^\alpha$, and we evaluate
\begin{align*}
  \sum_{p \leq y} \frac{g(p) \log p}{p} & =
    \sum_{p \leq y} \sqrt{\frac{2p}{p-1}} \frac{\log p}{p} \\
  & =  \sqrt{2} \sum_{p \leq y} \frac{\log p}{p \sqrt{1-1/p}} \\
  & =  \sqrt{2} \sum_{p \leq y} \frac{\log p}{p} \bigg( 1 + O
    \bigg( \frac{1}{p} \bigg) \bigg) \\
  & =  \sqrt{2} \sum_{p \leq y} \frac{\log p}{p} +
    O \bigg( \sum_{p \leq y} \frac{\log p}{p^2} \bigg) \\
  & =  \sqrt{2}\log y + O(1).
\end{align*}
Proposition~\ref{prop A3a} therefore applies with $\kappa=\sqrt2$, yielding
$$
  \sum_{n \leq y} \sqrt{\frac{2^{\omega(n)}} {n \phi(n)}} = \sum_{n \leq y} \frac{g(n)}{n} = c(g) (\log y)^{\sqrt{2}} + O_g((\log y)^{\sqrt{2}-1}) \ll (\log y)^{\sqrt{2}}
$$
as claimed (the $\ll$-constant is absolute since the function $g$ is fixed).
\end{proof}

\begin{lemma}
For all $y\ge2$ and for any nonzero integer $h$,
\[
\sum_{n \leq y} \sqrt{ \frac{2^{\omega(n)}(h,n)}{n \phi(n)} } \ll (\log y)^{\sqrt{2}} \prod_{p\mid h} \bigg( 1+\frac7{\sqrt p} \bigg)
\]
\label{being anal with h lemma}
\end{lemma}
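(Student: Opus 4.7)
The plan is to separate out from $n$ the divisor built from primes shared with $h$. Write $n=n_1 n_2$ where $n_1$ is the largest divisor of $n$ all of whose prime factors divide $h$, so that $(n_2,h)=1$ and $(n_1,n_2)=1$. Then $(h,n)=(h,n_1)\le n_1$, and since $\omega$ and $\phi$ split multiplicatively across the coprime factorization $n=n_1 n_2$, the summand is bounded by
\[
  \sqrt{\frac{2^{\omega(n)}(h,n)}{n\phi(n)}} \le \sqrt{\frac{2^{\omega(n_1)}}{\phi(n_1)}}\,\sqrt{\frac{2^{\omega(n_2)}}{n_2\phi(n_2)}}.
\]

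After exchanging the order of summation, I would bound the inner sum over $n_2 \le y/n_1$ subject to $(n_2,h)=1$ (a constraint dropped to only increase the estimate) by invoking Lemma~\ref{sqrt2 lemma} directly, producing $\ll (\log y)^{\sqrt{2}}$ uniformly in $n_1$. (In the degenerate range $n_1>y/2$ the inner sum is merely its $n_2=1$ term, which is trivially $\ll (\log y)^{\sqrt 2}$.) This reduces the problem to majorizing the outer sum
\[
  \sum_{\substack{n_1\ge 1 \\ p\mid n_1\,\Rightarrow\,p\mid h}} \sqrt{\frac{2^{\omega(n_1)}}{\phi(n_1)}},
\]
where I extend the sum over all such $n_1$ to obtain a convergent Euler product, one factor per prime dividing $h$:
\[
  \prod_{p\mid h}\bigg( 1+\sqrt 2\sum_{\alpha\ge 1} \frac{1}{\sqrt{p^{\alpha-1}(p-1)}} \bigg) = \prod_{p\mid h}\bigg( 1+\sqrt{\frac{2}{p-1}}\cdot\frac{\sqrt p}{\sqrt p-1} \bigg).
\]

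The main (and essentially only) obstacle is verifying the elementary per-prime inequality
\[
  1+\frac{\sqrt{2p}}{\sqrt{p-1}\,(\sqrt p-1)} \le 1+\frac{7}{\sqrt p}
\]
for every prime $p\ge 2$, which amounts to $p\sqrt 2 \le 7\sqrt{p-1}\,(\sqrt p-1)$. The bound is tight at $p=2$, reducing there to $5\sqrt 2\ge 7$ (i.e., $50\ge 49$), and it widens comfortably for larger $p$ since the right-hand side grows like $7p$ while the left-hand side grows like $p\sqrt 2$. Once this pointwise bound is established, multiplying over primes $p\mid h$ and pairing with the $(\log y)^{\sqrt 2}$ factor from the inner sum finishes the proof.
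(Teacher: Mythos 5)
Your proof is correct, and it takes a genuinely different route from the paper's in the reindexing step. The paper sorts the integers $n \le y$ by the value of $d = (n,h)$ and rewrites $n = dm$; you instead factor $n = n_1 n_2$ with $n_1$ the $h$-smooth part of $n$ and $(n_2, h) = 1$, so that $(h, n) = (h, n_1) \le n_1$ and the summand splits multiplicatively. Both routes then apply Lemma~\ref{sqrt2 lemma} to the coprime-to-$h$ piece and reduce the $h$-dependence to the same Euler product over $p \mid h$, controlled by the identical per-prime inequality (your $p\sqrt{2} \le 7\sqrt{p-1}(\sqrt{p}-1)$ is the paper's $\sqrt{2/(p-1)}\,(1 - p^{-1/2})^{-1} \le 7/\sqrt{p}$, and you are right that it is nearly an equality at $p=2$, where the comparison reduces to $50 \ge 49$). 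What your decomposition buys is that $n \mapsto (n_1, n_2)$ is an exact bijection, so every rearrangement is transparently valid; by contrast, the paper's reparameterization $n = dm$ with $(m,h) = 1$ does not actually exhaust the set of $n$ with $(n,h) = d$ (take $h=4$, $d=4$, $n=8$: then $m=2$ is not coprime to $h$), so one has to take a bit more care there to keep the inequality pointing the right way. Your factorization sidesteps that subtlety entirely.
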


\begin{proof}
We sort the integers $n\le y$ according to their greatest common divisor $d$ with $h$:
\[
\sum_{n \leq y} \sqrt{ \frac{2^{\omega(n)}(h,n)}{n \phi(n)} } = \sum_{d\mid h} \sqrt d \sum_{\substack{n\le y \\ (n,h)=d}} \sqrt{ \frac{2^{\omega(n)}}{n \phi(n)} } = \sum_{d\mid h} \sqrt d \sum_{\substack{m\le y/d \\ (m,h)=1}} \sqrt{ \frac{2^{\omega(dm)}}{dm \phi(dm)} }.
\]
The condition $(m,h)=1$ implies that $(m,d)=1$, and so
\[
\sum_{n \leq y} \sqrt{ \frac{2^{\omega(n)}(h,n)}{n \phi(n)} } = \sum_{d\mid h} \sqrt{\frac{2^{\omega(d)}}{\phi(d)}} \sum_{\substack{m\le y/d \\ (m,h)=1}} \sqrt{ \frac{2^{\omega(m)}}{m \phi(m)} } \le \sum_{d\mid h} \sqrt{\frac{2^{\omega(d)}}{\phi(d)}} \sum_{m\le y} \sqrt{ \frac{2^{\omega(m)}}{m \phi(m)} }.
\]
The inner sum is $\ll(\log y)^{\sqrt2}$ by Lemma~\ref {sqrt2 lemma}, and so it remains to bound the sum over $d$. But this is a multiplicative function of $h$, and so
\begin{align}
  \sum_{d\mid h} \sqrt{\frac{2^{\omega(d)}}{\phi(d)}} &= \prod_{p^\alpha\parallel h} \bigg( 1 + \sqrt{\frac{2^{\omega(p)}}{\phi(p)}} + \cdots + \sqrt{\frac{2^{\omega(p^\alpha)}}{\phi(p^\alpha)}} \bigg) \notag \\
  &= \prod_{p^\alpha\parallel h} \bigg( 1 + \sqrt{\frac2{p-1}} + \cdots + \sqrt{\frac2{p^{r-1}(p-1)}} \bigg) \notag \\
  &= \prod_{p^\alpha\parallel h} \bigg( 1 + \sqrt{\frac2{p-1}} \sum_{j=0}^{r-1} \frac1{p^{j/2}} \bigg) \notag \\
  &\le \prod_{p\mid h} \bigg( 1 + \sqrt{\frac2{p-1}} \big( 1-p^{-1/2} \big)^{-1} \bigg).
\label{better than 7}
\end{align}
The lemma follows upon verifying that $\sqrt{2/(p-1)}( 1-p^{-1/2})^{-1} \le 7/\sqrt p$ for all $p\ge2$.
\end{proof}

For certain multiplicative functions $g$, we can actually find an asymptotic formula not just for $\sum_{m\le y} g(m)/m$ but also for $\sum_{m\le y} g(m)$. In our next lemma we record only the upper bound, even though we can derive an asymptotic formula; in the lemma after that, the asymptotic formula is important enough to retain.

\begin{lemma}
For all $y\ge2$, we have $\displaystyle \sum_{m \le y} \prod_{p\mid m} \bigg( 1+\frac7{\sqrt p} \bigg) \ll y$.
\label{h payoff lemma}
\end{lemma}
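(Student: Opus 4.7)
The plan is to sidestep the asymptotic machinery of Proposition~\ref{prop A3a} and attack the sum directly by a Dirichlet-convolution expansion. Writing $g(m) = \prod_{p \mid m}(1 + 7/\sqrt p)$, I would first distribute the product over the squarefree divisors of $m$ to obtain the identity
\[
g(m) = \sum_{\substack{d \mid m \\ \mu^2(d)=1}} \frac{7^{\omega(d)}}{\sqrt d},
\]
which follows because each term in the expansion corresponds to a choice of a subset $S$ of the primes dividing $m$; the product $\prod_{p \in S} 7/\sqrt p$ equals $7^{\omega(d)}/\sqrt d$, where $d = \prod_{p \in S} p$ is a squarefree divisor of $m$, and conversely every squarefree divisor of $m$ arises this way.

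Next I would interchange the order of summation and bound $\lfloor y/d \rfloor \le y/d$ to get
\[
\sum_{m \le y} g(m) = \sum_{\substack{d \le y \\ \mu^2(d)=1}} \frac{7^{\omega(d)}}{\sqrt d} \bigg\lfloor \frac{y}{d} \bigg\rfloor \le y \sum_{\substack{d \ge 1 \\ \mu^2(d)=1}} \frac{7^{\omega(d)}}{d^{3/2}}.
\]
The remaining Dirichlet series factors as the absolutely convergent Euler product $\prod_p (1 + 7/p^{3/2})$, since $\sum_p p^{-3/2} < \infty$. This finite constant yields the claimed bound $\sum_{m \le y} g(m) \ll y$.

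There is no real obstacle here; the argument is essentially one step once the correct expansion is in hand. The alternative route through Proposition~\ref{prop A3a} would actually be harder for this particular lemma: that result, applied with $\kappa = 1$ (the hypothesis $g(p^\alpha) \le U$ holds with $U = 1 + 7/\sqrt2$, and $\sum_{p \le w} g(p) \log p/p = \log w + O(1)$ since the $7/\sqrt p$ contribution is bounded), would give $\sum_{m \le y} g(m)/m \ll \log y$, from which crude partial summation yields only the weaker bound $y \log y$. Recovering the sharp $\ll y$ bound along that route requires the full asymptotic for $\sum_{m \le y} g(m)/m$ together with careful Abel summation — which is precisely the ``asymptotic formula'' the author alludes to in the remark preceding the lemma. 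The direct convolution identity above bypasses this entirely.
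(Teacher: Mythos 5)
Your proof is correct. It uses exactly the same Dirichlet-convolution identity as the paper --- writing $\prod_{p\mid m}(1+7/\sqrt p) = \sum_{d\mid m} g(d)$ with $g(d) = 7^{\omega(d)}\mu^2(d)/\sqrt d$ --- and interchanges the order of summation in the same way. Where you differ is in what you do with the inner sum: you simply extend $\sum_{d\le y} g(d)/d$ to the full Dirichlet series and invoke absolute convergence of the Euler product $\prod_p(1+7/p^{3/2})$, which is the most direct route to the claimed upper bound. The paper instead decomposes $\lfloor y/d\rfloor = y/d + O(1)$ into a main term and error term and applies Proposition~\ref{prop A3a} with $\kappa=0$ to the kernel $g$ to show $\sum_{d\le y} g(d)/d = c(g) + O((\log y)^{-1})$. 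The paper's bookkeeping is heavier but buys a genuine asymptotic formula rather than just an upper bound --- precisely what the author flags in the remark preceding the lemma (``we record only the upper bound, even though we can derive an asymptotic formula''). For the present lemma only the upper bound is needed, so your shortcut is perfectly adequate. One small correction to your closing paragraph: the paper does not apply Proposition~\ref{prop A3a} with $\kappa=1$ to the original product (which, as you note, would be wasteful); it applies it with $\kappa=0$ to the convolution kernel, i.e.\ to exactly the same sum you estimate directly. So the two arguments are even closer than you suggest.
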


\begin{proof}
Define the multiplicative function $g(m) = 7^{\omega(m)}\mu^2(m)/\sqrt m$, where $\mu$ is the M\"obius mu-function. One can check that
\[
\prod_{p\mid m} \bigg( 1+\frac7{\sqrt p} \bigg) = \sum_{d\mid m} g(d)
\]
(because both sides are multiplicative functions of $m$, it suffices to check the equality on prime powers). We have
\begin{equation}
  \sum_{m\le y} \prod_{p\mid m} \bigg( 1+\frac7{\sqrt p} \bigg) = \sum_{m\le y} \sum_{d\mid m} g(d) 
  = \sum_{d\le y} g(d) \sum_{\substack{m\le y \\ d\mid m}} 1 
  = y\sum_{d\le y} \frac{g(d)}d + O\bigg( \sum_{d\le y} g(d) \bigg).
  \label{divisor trick 1}
\end{equation}
Since $g(m) \ll_\ep m^{-1/2+\ep}$ for any $\ep>0$, the error term is $O_\ep(y^{1/2+\ep})$. We note that $g(p^\alpha) < 7$ for all prime powers $p^\alpha$, and we evaluate
\begin{equation*}
  \sum_{p \leq y} \frac{g(p) \log p}{p} =
    \sum_{p \leq y} \frac{7\log p}{p^{3/2}} \ll 1.
\end{equation*}
Proposition~\ref{prop A3a} therefore applies with $\kappa=0$, yielding
$$
  \sum_{d \leq y} \frac{g(d)}d = c(g) + O_g((\log y)^{-1}) \ll 1.
$$
The proposition now follows from equation~\eqref {divisor trick 1} and this last estimate.
\end{proof}

\begin{lemma}
Define $S(y) = \sum_{b \le y} \rho(b)$. For all $y\ge2$, we have $S(y) = \frac6{\pi^2}y\log y+O(y)$.
\label{rho asymptotic lemma}
\end{lemma}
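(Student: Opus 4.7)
The plan is to apply M\"obius inversion to write $\rho$ as a Dirichlet convolution $\rho = \mathbf{1} \ast f$, where $\mathbf{1}$ is the constant function $1$ and $f = \mu \ast \rho$; then combine the Dirichlet hyperbola identity with Proposition~\ref{prop A3a} to handle the main term. A direct application of Proposition~\ref{prop A3a} to $\rho$ itself only controls $\sum_{n \le y} \rho(n)/n$ with a logarithmic relative error, which after partial summation is too weak to recover the $O(y)$ error in $S(y)$; the convolution against $\mathbf{1}$ fixes this.

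First I would compute $f$ on prime powers via $f(p^\alpha) = \rho(p^\alpha) - \rho(p^{\alpha-1})$. Using the values of $\rho$ on prime powers displayed just before~\eqref{special rho formula}, this gives $f(p) = 1$ and $f(p^\alpha) = 0$ for $\alpha \ge 2$ when $p$ is odd, while at $p = 2$ one finds $f(2) = 0$, $f(4) = 1$, $f(8) = 2$, and $f(2^\alpha) = 0$ for $\alpha \ge 4$. Hence $f(n) \ne 0$ only when $n = 2^a m$ with $a \in \{0,2,3\}$ and $m$ odd squarefree, and in that case $|f(n)| \le 2$.

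Next I would use the hyperbola identity
\[
  S(y) = \sum_{n \le y}\sum_{d\mid n} f(d) = \sum_{d \le y} f(d)\lfloor y/d\rfloor = y\sum_{d \le y}\frac{f(d)}{d} + O\bigg(\sum_{d \le y}|f(d)|\bigg).
\]
The error term is $O(y)$, since $|f| \le 2$ and $f$ is supported on at most three translates of the odd squarefree integers up to~$y$. For the main sum, I would invoke Proposition~\ref{prop A3a} with $g(n) = \mu^2(n)$ restricted to odd $n$ (which satisfies the hypotheses with $U=1$ and $\kappa=1$, and for which the product there evaluates to $c(g) = 4/\pi^2$) to get
\[
  \sum_{\substack{m \le z \\ m\text{ odd squarefree}}}\frac{1}{m} = \frac{4}{\pi^2}\log z + O(1).
\]
Splitting $d = 2^a m$ in the main sum then yields
\[
  \sum_{d \le y}\frac{f(d)}{d} = \sum_{a \in \{0,2,3\}}\frac{f(2^a)}{2^a}\bigg(\frac{4}{\pi^2}\log\frac{y}{2^a} + O(1)\bigg) = \frac{4}{\pi^2}\cdot\frac{3}{2}\log y + O(1) = \frac{6}{\pi^2}\log y + O(1),
\]
using $1 + \tfrac14 + \tfrac14 = \tfrac32$.

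Combining everything produces $S(y) = (6/\pi^2)\,y\log y + O(y)$, as claimed. I do not anticipate any real obstacle; the only delicate point is the bookkeeping at the prime $2$, where $\rho$ deviates from the cleaner pattern $2^{\omega(\cdot)}$. Happily, the dyadic local factors $f(1)=f(4)=1$ and $f(8)=2$ combine with the constant $4/\pi^2$ to reproduce exactly the coefficient $6/\pi^2$ that governs the classical estimate $\sum_{n \le y} 2^{\omega(n)} = (6/\pi^2)\,y\log y + O(y)$.
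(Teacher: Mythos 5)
Your proof is correct and follows essentially the same route as the paper: both write $\rho = \mathbf{1} \ast g$ with $g = \mu \ast \rho$ supported on (translates of) odd squarefree integers, use the identity $S(y) = y\sum_{d\le y}g(d)/d + O(\sum_{d\le y}|g(d)|)$, and then apply Proposition~\ref{prop A3a} with $\kappa = 1$ to extract the main term. The only cosmetic difference is that the paper applies Proposition~\ref{prop A3a} directly to the full multiplicative function $g$ and reads the constant $6/\pi^2$ off the Euler product $c(g)$, whereas you factor the sum over $d = 2^a m$ and handle the prime $2$ by hand; the arithmetic is the same either way.
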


\begin{proof}
Define the multiplicative function $g$ by its values on prime powers as follows:
\[
g(p^\alpha) = \mu^2(p^\alpha) \text{ for $p$ odd or $\alpha\ge4$;} \qquad g(2)=0,\, g(4)=1,\, g(8)=2.
\]
As in the proof of the previous lemma, one can check that $\rho(m) = \sum_{d\mid m} g(d)$, and so
\begin{equation}
  S(y) = y\sum_{d\le y} \frac{g(d)}d + O\bigg( \sum_{d\le y} g(d) \bigg).
  \label{divisor trick 2}
\end{equation}
Since $g$ is bounded by 2, the error term is $O(y)$. We note that $g(p^\alpha) \le 2$ for all prime powers $p^\alpha$, and we evaluate
\begin{equation*}
  \sum_{p \leq y} \frac{g(p) \log p}{p} =
    \sum_{2<p \leq y} \frac{\log p}{p} = \log y + O(1).
\end{equation*}
Proposition~\ref{prop A3a} therefore applies with $\kappa=1$, yielding
\begin{align*}
  \sum_{d \leq y} \frac{g(d)}d &= \log y \bigg( 1-\frac12 \bigg) \bigg( 1+\frac02+\frac14+\frac28 \bigg) \prod_{p>2} \bigg( 1-\frac1p \bigg) \bigg( 1+\frac1p \bigg) + O_g(1) \\
  &= \log y \prod_p \bigg( 1-\frac1{p^2} \bigg) + O(1) = \frac{\log y}{\zeta(2)} + O(1) = \frac6{\pi^2}\log y+O(1).
\end{align*}
The proposition now follows from equation~\eqref {divisor trick 2} and this last asymptotic formula.
\end{proof}

When $\rho$ is generalized to $\rho_f$ for any reducible, nonsquare quadratic polynomial $f$ with integer coefficients, the proof of Lemma~\ref {rho asymptotic lemma} generalizes to yield
\begin{equation}
  \sum_{m\le y} \rho_f(m) = c(f) y\log y+O_f(y)
\label{rho_f asymptotic}
\end{equation}
for some positive constant $c(f)$. It is also easy to show (by splitting the sum into dyadic intervals, for example) that
\begin{equation}
\sum_{a \le A} \frac{\rho(a)}{a^{3/4}} \ll A^{1/4}\log A.
\label{partial summation equation}
\end{equation}

\medskip
{\em Acknowledgements.} We thank John Friedlander, Roger Heath-Brown, and Hugh Montgomery for their insights into the work of Erd\H os--Tur\'an and Hooley.


\begin{thebibliography}{99}

\bibitem{AS}
M.~Abramowitz and I.~A.~Stegun, {\em Handbook of Mathematical Functions}, Dover Publications Inc., New York (1965).

\bibitem{dujella}
A.~Dujella, ``On the number of Diophantine $m$-tuples'', {\em Ramanujan J.} {\bf 15} (2008), no.~1, 37--46.

\bibitem{other dujella}
A.~Dujella, ``There are only finitely many Diophantine quintuples'', {\em J.\ Reine Angew.\ Math.} {\bf 566}  (2004), 183--214.
 
\bibitem{hooley}
C.~Hooley, ``On the distribution of the roots of polynomial congruences'', {\em Mathematika} {\bf 11} (1964), 39--49.

\bibitem{other hooley}
C.~Hooley, ``On the number of divisors of quadratic congruences'', {\em Acta.\ Math.} {\bf 110} (1963), 97--114.

\bibitem{behemoth}
G.~Martin, ``An asymptotic formula for the number of smooth values of a polynomial'', {\em J.~Number Theory} {\bf 93} (2002), no.~2, 	108--182.

\bibitem{montgomery}
H.~L.~Montgomery, {\em Ten Lectures on the Interface Between Analytic Number Theory and Harmonic Analysis}, CBMS Regional Conference Series in Mathematics, 84, American Mathematical Society, Providence, RI (1994).

\bibitem{NZM}
I.~Niven, H.~S.~Zuckerman, and H.~L.~Montgomery, {\em An Introduction to the Theory of Numbers}, 5th edition, John Wiley \&\ Sons, Inc., New York (1991).



\end{thebibliography}
\end{document}